\newtheorem{theorem}{Theorem}[section]
\newtheorem{lemma}[theorem]{Lemma}
\theoremstyle{definition}
\newtheorem{definition}[theorem]{Definition}
\newtheorem{remark}[theorem]{Remark}
\numberwithin{equation}{section}
\begin{document}
%%%%%%%%%%%%%%%%%%%%%%%%%%%%%%%%%%%%%%%%%%%%%%%%%%%%%%%%%%%%%%%%%%%%%%%%%%%%%%
\title[Donaldson's equation]{A priori estimates for Donaldson's equation over compact Hermitian manifolds}
%%%%%%%%%%%%%%%%%%%%%%%%%%%%%%%%%%%%%%%%%%%%%%%%%%%%%%%%%%%%%%%%%%%%%%%%%%%%%%
\author{Yi Li}
\address{Department of Mathematics,
Shanghai Jiao Tong University, 800 Dongchuan Road, Shanghai, 200240 China}

\email{yilicms@gmail.com}

\subjclass[2010]{Primary 53C55; Secondary 32W20, 35J60, 58J05}

%\date{2012}

\keywords{Donaldson's equation, Hermitian manifold}

%%%%%%%%%%%%%%%%%%%%%%%%%%%%%%%%%%%%%%%%%%%%%%%%%%%%%%%%%%%%%%%%%%%%%%%%%%%%%%
\begin{abstract} In this paper we prove a priori estimates for 
Donaldson's equation
\begin{equation*}
\omega\wedge(\chi+\sqrt{-1}\partial\bar{\partial}\varphi)^{n-1}
=e^{F}(\chi+\sqrt{-1}\partial\bar{\partial}\varphi)^{n},
\end{equation*}
over a compact complex manifold $X$ of complex dimension $n$, where $\omega$ and $\chi$ are arbitrary Hermitian metrics. Our estimates answer a question of Tosatti-Weinkove in \cite{TW1}.
\end{abstract}
%%%%%%%%%%%%%%%%%%%%%%%%%%%%%%%%%%%%%%%%%%%%%%%%%%%%%%%%%%%%%%%%%%%%%%%%%%%%%%
\maketitle

\tableofcontents

%%%%%%%%%%%%%%%%%%%%%%%%%%%%%%%%%%%%%%%%%%%%%%%%%%%%%%%%%%%%%%%%%%%%%%%%%%%%%%
%%%%%%%%%%%%%%%%%%%%%%%%%%%%%%%%%%%%%%%%%%%%%%%%%%%%%%%%%%%%%%%%%%%%%%%%%%%%%%
\section{Introduction}\label{section1}
%%%%%%%%%%%%%%%%%%%%%%%%%%%%%%%%%%%%%%%%%%%%%%%%%%%%%%%%%%%%%%%%%%%%%%%%%%%%%%
%%%%%%%%%%%%%%%%%%%%%%%%%%%%%%%%%%%%%%%%%%%%%%%%%%%%%%%%%%%%%%%%%%%%%%%%%%%%%%

%%%%%%%%%%%%%%%%%%%%%%%%%%%%%%%%%%%%%%%%%%%%%%%%%%%%%%%%%%%%%%%%%%%
\subsection{Donaldson's equation over compact K\"ahler manifolds}
%%%%%%%%%%%%%%%%%%%%%%%%%%%%%%%%%%%%%%%%%%%%%%%%%%%%%%%%%%%%%%%%%%%

Let $(X,\omega)$ be a compact K\"ahler manifold of the complex dimension $n$, and $\chi$ another K\"ahler metric on $X$.
In \cite{D1}, Donaldson considered the following interesting equation
\begin{equation}
\omega\wedge\eta^{n-1}=c\eta^{n}, \ \ \ [\eta]=[\chi],\label{1.1}
\end{equation}
where $c$ is a constant, depending only on the K\"ahler classes of $[\chi]$ and $[\omega]$, given by
\begin{equation}
c=\frac{\int_{X}\omega\wedge\chi^{n-1}}{\int_{X}\chi^{n}}.\label{1.2}
\end{equation}
He noted that a necessary condition for equation (\ref{1.1}) is
\begin{equation}
nc\chi-\omega>0,\label{1.3}
\end{equation}
and then conjectured that the condition (\ref{1.3}) is also sufficient. For $n=2$, Chen \cite{C1} observed that in this case the equation (\ref{1.1}) reduces to a complex Monge-Amp\`ere equation completely solved by Yau on his celebrated work on Calabi's conjecture \cite{Yau1}.

%%%%%%%%%%%%%%%%%%%%%%%%%%%%%%%%%%%%%%%%%%%%%%%%%%%%%%%%%%%%%%%%%%%
\subsection{$J$-flow and Donaldson's equation}
%%%%%%%%%%%%%%%%%%%%%%%%%%%%%%%%%%%%%%%%%%%%%%%%%%%%%%%%%%%%%%%%%%%

To better understand the equation (\ref{1.1}), Donaldson \cite{D1} and Chen \cite{C1} independently discovered the $J$-flow whose critical point gives the equation (\ref{1.1}), and Chen showed that such flow always exists for all time. Using the $J$-flow, Chen \cite{C2} proved that if $n=2$ and the holomorphic bisectional curvature of $\omega$ is nonnegative then the $J$-flow converges to a critical metric. Later, the curvature assumption was removed by Weinkove \cite{W1} and hence gave an alternative proof of Donaldson's conjecture on K\"ahler surfaces. For higher dimensional case, Weinkove \cite{W2} solved Donaldson's conjecture on a slightly strong condition
\begin{equation}
nc\chi-(n-1)\omega>0\label{1.4}
\end{equation}
using the $J$-flow. For more detailed discussions and related works, we refer to \cite{FL1, FL2, FLM, FLSW, SW, SW1}.

%%%%%%%%%%%%%%%%%%%%%%%%%%%%%%%%%%%%%%%%%%%%%%%%%%%%%%%%%%%%%%%%%%%%%%%
\subsection{Donaldson's equation over compact Hermitian manifolds}
%%%%%%%%%%%%%%%%%%%%%%%%%%%%%%%%%%%%%%%%%%%%%%%%%%%%%%%%%%%%%%%%%%%%%%%

Recently, the complex Monge-Amp\`ere equation over compact Hermitian manifolds was solved Tosatti and Weinkove \cite{TW1, TW2}. Other interesting estimates
can be found in \cite{TW3, Z1, ZZ1}. A parabolic proof was late given by Gill \cite{Gill1} by considering a parabolic complex Monge-Amp\`ere equation. Other parabolic flows over compact Hermitian manifolds were considered in \cite{LY1, TW3, TW4, TWY1}, where they obtained lots of interesting results parallel to those in K\"ahler case. By Tosatti-Weinkove's work, the author considers Donaldson's equation over compact Hermitian manifolds.

Let $(X,\omega)$ be a compact Hermitian manifold of the complex dimension $n$ and $\chi$ another Hermitian metric on $X$. We denote by $\mathcal{H}_{\chi}$ the set of all real-valued smooth functions $\varphi$ on $X$ such that $\chi_{\varphi}:=\chi+\sqrt{-1}
\partial\overline{\partial}\varphi>0$. Locally we have
\begin{equation}
\omega=\sqrt{-1}g_{i\bar{j}}dz^{i}\wedge dz^{\bar{j}}, \ \ \
\chi=\sqrt{-1}\chi_{i\bar{j}}dz^{i}\wedge dz^{\bar{j}}.\label{1.5}
\end{equation}
For any real positive $(1,1)$-form $\alpha:=\sqrt{-1}\alpha_{i\bar{j}}dz^{i}\wedge dz^{\bar{j}}$ and real $(1,1)$-form $\beta:=\sqrt{-1}\beta_{i\bar{j}}dz^{i}\wedge dz^{\bar{j}}$ we set
\begin{equation}
{\rm tr}_{\alpha}\beta:=\alpha^{i\bar{j}}\beta_{i\bar{j}}.\label{1.6}
\end{equation}
We consider Donaldson's equation
\begin{equation}
\omega\wedge\chi^{n-1}_{\varphi}=e^{F}\cdot \chi^{n}_{\varphi}, \ \ \ \varphi\in\mathcal{H}_{\varphi}\label{1.7}
\end{equation}
on $X$, where $F$ is a given smooth function on $X$.

The main result of this paper is the following a priori estimates.

\begin{theorem}\label{t1.1} Let $(X,\omega)$ be a compact Hermitian manifold of the complex dimension $n$ and $\chi$ another Hermitian metric. Let $\varphi$ be a smooth solution of Donaldson's equation (\ref{1.7}). Assume that
\begin{equation}
\chi-\frac{n-1}{ne^{F}}\omega>0.\label{1.8}
\end{equation}
Then
\begin{itemize}

\item[(1)] there exist uniform constant $A>0$ and $C>0$, depending only on $X, \omega,\chi$, and $F$, such that
    \begin{equation}
    {\rm tr}_{\omega}\chi_{\varphi}\leq C\cdot e^{A(\varphi-\inf_{X}\varphi)};
    \end{equation}

\item[(2)] there exists a uniform constant $C>0$, depending only on $X, \omega,\chi$, and $F$, such that
    \begin{equation}
    ||\varphi||_{C^{0}}\leq C;
    \end{equation}

\item[(3)] there are uniform $C^{\infty}$ a priori estimates on $\varphi$ depending only on $X,\omega,\chi$, and $F$.

\end{itemize}
\end{theorem}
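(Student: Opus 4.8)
The strategy is to reduce (\ref{1.7}) to a scalar equation for a trace and then establish the three estimates in order, by a maximum principle, an Alexandrov--Bakelman--Pucci estimate, and Evans--Krylov theory with bootstrapping, respectively; the overall structure follows Tosatti and Weinkove's treatment \cite{TW1} of the complex Monge--Amp\`ere equation on Hermitian manifolds, while the $C^{2}$ estimate itself follows Weinkove's $C^{2}$ estimate for the $J$--flow \cite{W2}. Using the pointwise identity $\omega\wedge\chi_\varphi^{n-1}=\tfrac1n({\rm tr}_{\chi_\varphi}\omega)\,\chi_\varphi^{n}$, equation (\ref{1.7}) is equivalent to
\begin{equation*}
{\rm tr}_{\chi_\varphi}\omega=\chi_\varphi^{i\bar j}g_{i\bar j}=ne^{F}.
\end{equation*}
Applying the arithmetic--geometric mean inequality to the eigenvalues $\mu_{1},\dots,\mu_{n}$ of $\chi_\varphi$ relative to $\omega$, the relation $\sum_{i}\mu_{i}^{-1}=ne^{F}$ gives $\mu_{i}\ge c_{0}$ and $\prod_{i}\mu_{i}\ge c_{0}$ for a uniform $c_{0}>0$, i.e. $\chi_\varphi\ge c_{0}\,\omega$ and $\chi_\varphi^{n}\ge c_{0}\,\omega^{n}$; consequently ${\rm tr}_{\chi_\varphi}\chi$ is uniformly bounded, so $\Delta_{\chi_\varphi}\varphi=n-{\rm tr}_{\chi_\varphi}\chi$ is bounded and $\Delta_{\omega}\varphi={\rm tr}_\omega\chi_\varphi-{\rm tr}_\omega\chi\ge -C$. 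I will use throughout the linearized operator $\mathcal L u=\Phi^{k\bar l}u_{k\bar l}$, with $\Phi^{k\bar l}=\chi_\varphi^{k\bar q}\chi_\varphi^{p\bar l}g_{p\bar q}>0$; contracting indices gives $\mathcal L\varphi=ne^{F}-\Phi^{k\bar l}\chi_{k\bar l}$, and since $\chi>0$ one has $\Phi^{k\bar l}\chi_{k\bar l}\ge c\sum_{k}\Phi^{k\bar k}$.

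For part (1) I would apply the maximum principle to $Q=\log({\rm tr}_\omega\chi_\varphi)-A\varphi$, with $A=A(X,\omega,\chi,F)$ large. At a maximum point $p_{0}$, in coordinates normal for $\omega$ and diagonalizing $\chi_\varphi$ at $p_{0}$, one has $0\ge\mathcal L Q=\mathcal L(\log{\rm tr}_\omega\chi_\varphi)-A\mathcal L\varphi$. Writing $\mathcal L(\log{\rm tr}_\omega\chi_\varphi)=({\rm tr}_\omega\chi_\varphi)^{-1}\mathcal L({\rm tr}_\omega\chi_\varphi)-({\rm tr}_\omega\chi_\varphi)^{-2}\Phi^{k\bar l}({\rm tr}_\omega\chi_\varphi)_{k}({\rm tr}_\omega\chi_\varphi)_{\bar l}$ and commuting derivatives through the contracted equation ${\rm tr}_{\chi_\varphi}\omega=ne^{F}$ (differentiated twice, the mixed fourth partials of $\varphi$ simply commuting), one produces: the curvature of $\omega$ and the Hessian of $\chi$, which are bounded; a nonnegative third--order term $\mathcal G\ge 0$ coming from the part of $\mathcal L({\rm tr}_\omega\chi_\varphi)$ quadratic in $\partial\chi_\varphi$; the negative gradient term from the logarithm; bounded contributions involving $\partial F$; and --- the specifically Hermitian terms --- products of the torsion of $\omega$ and of $\chi$ with the first and third derivatives of $\varphi$, weighted by $\Phi^{k\bar l}$ and powers of the $\mu_{i}$, present precisely because $\omega$ and $\chi$ are not closed. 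From $-A\mathcal L\varphi=-Ane^{F}+A\Phi^{k\bar l}\chi_{k\bar l}$ one gets in addition the good term $A\Phi^{k\bar l}\chi_{k\bar l}\ge cA\sum_{k}\Phi^{k\bar k}$. The crux of the argument, which I expect to be the main obstacle, is the delicate matching of $\mathcal G$ against the negative gradient term and the torsion error terms: a naive Cauchy--Schwarz does not suffice, and it is here that the pointwise hypothesis (\ref{1.8}), $\chi>\tfrac{n-1}{ne^{F}}\omega$, is essential --- this is exactly the mechanism of Weinkove's $C^{2}$ estimate for the $J$--flow \cite{W2} --- while the torsion cross terms are dominated, after Cauchy--Schwarz, by a small multiple of $\mathcal G$ plus $C\sum_{k}\Phi^{k\bar k}$, which is absorbed into $cA\sum_{k}\Phi^{k\bar k}$ for $A$ large, as in Tosatti and Weinkove's control of torsion for the Hermitian Monge--Amp\`ere equation \cite{TW1}. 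This yields a bound on ${\rm tr}_\omega\chi_\varphi$ at $p_{0}$, and since $Q\le Q(p_{0})$ everywhere one obtains ${\rm tr}_\omega\chi_\varphi\le Ce^{A(\varphi-\inf_{X}\varphi)}$.

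For part (2), since $\varphi$ and $\varphi+{\rm const}$ solve the same equation, it suffices to bound ${\rm osc}_{X}\varphi$; normalize $\sup_{X}\varphi=0$ and pick $x_{0}$ with $\varphi(x_{0})=\inf_{X}\varphi$. I would run an Alexandrov--Bakelman--Pucci argument (as used in the Hermitian setting by B\l ocki and by Sz\'ekelyhidi) in a fixed coordinate ball $B$ about $x_{0}$, comparing $\varphi$ with its lower affine envelope. On the contact set one has $D^{2}\varphi\ge 0$, hence $0\le\varphi_{i\bar j}\le\chi_{\varphi,i\bar j}$ and $\det_{\mathbb R}D^{2}\varphi\le C(\det(\varphi_{i\bar j}))^{2}\le C(\det(\chi_{\varphi,i\bar j}))^{2}\le C(\chi_\varphi^{n}/\omega^{n})^{2}\le Ce^{2nA(\varphi-\inf_{X}\varphi)}$, using part (1) together with the arithmetic--geometric mean inequality; the key point is that on the contact set $\varphi$ stays below an affine function equal to $\varphi(x_{0})$ up to $O({\rm diam}\,B)$, so that $\varphi-\inf_{X}\varphi=O(1)$ there and the exponential factor is uniformly bounded. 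The ABP inequality then gives $-\inf_{X}\varphi\le C\big(\int_{B}\det_{\mathbb R}D^{2}\varphi\big)^{1/2n}\le C$, whence ${\rm osc}_{X}\varphi\le C$; the bounds $\Delta_{\omega}\varphi\ge -C$ and $\chi_\varphi\ge c_{0}\omega$ from the first paragraph may also be invoked here, and alternatively one can obtain ${\rm osc}_{X}\varphi\le C$ by a Moser iteration against the equation in the spirit of \cite{TW1}, absorbing the $d\omega$ and $d\chi$ error terms.

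Finally, for part (3): combining (1) and (2) gives ${\rm tr}_\omega\chi_\varphi\le Ce^{A\,{\rm osc}_{X}\varphi}\le C$, and together with $\chi_\varphi\ge c_{0}\omega$ this makes (\ref{1.7}) uniformly elliptic with $C^{0}$--bounded coefficients. Writing the equation as $G(\chi+\sqrt{-1}\partial\bar\partial\varphi)=ne^{F}$ with $G(M)={\rm tr}(g\,M^{-1})$ --- a convex function of the entries of the positive Hermitian matrix $M$, since $M\mapsto M^{-1}$ is operator convex --- the equation is uniformly elliptic and concave (in $-G$), so the complex Evans--Krylov theorem, whose Hermitian version goes through unchanged because the torsion enters only through lower--order terms, yields an interior $C^{2,\alpha}$ estimate depending only on $X,\omega,\chi,F$. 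Differentiating the equation and iterating Schauder estimates then gives the uniform $C^{\infty}$ a priori bounds asserted in (3).
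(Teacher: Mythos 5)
Your overall architecture for parts (1) and (3) coincides with the paper's: the maximum principle applied to $Q=\log({\rm tr}_{\omega}\chi_{\varphi})-A\varphi$ with the linearized operator $h^{i\bar j}=\chi_{\varphi}^{i\bar\ell}\chi_{\varphi}^{k\bar j}g_{k\bar\ell}$, followed by Evans--Krylov and bootstrapping. But in part (1) you have mislocated the role of hypothesis \eqref{1.8} and omitted the step that actually produces the bound. In the paper the matching of the good third-order terms against the gradient term of the logarithm and against all torsion terms is a condition-free Cauchy--Schwarz computation (Lemmas \ref{l2.4} and \ref{l2.7}), giving $\widetilde{\Delta}\log({\rm tr}_{\omega}\chi_{\varphi})\gtrsim-1$; it is in fact easier here than for the Monge--Amp\`ere equation of \cite{TW1}, because the equation itself forces $\chi_{\varphi}\geq c_{0}\omega$, so $\chi_{\varphi}^{i\bar i}$ and $h^{i\bar i}$ are uniformly bounded. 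Hypothesis \eqref{1.8} enters only afterwards. At the maximum point the inequality you reach, in coordinates diagonalizing $\chi_{\varphi}$ with eigenvalues $\lambda_{i}$, is of the form $0\geq -C-Ane^{F}+A\sum_{i}\chi_{i\bar i}/\lambda_{i}^{2}$: it involves only the inverse eigenvalues, so by itself it merely reproves the known lower bound $\lambda_{i}\geq c_{0}$ and gives no upper bound on ${\rm tr}_{\omega}\chi_{\varphi}$ at all. The paper extracts the upper bound by using the equation to rewrite $-Ane^{F}=-A\sum_{i}1/\lambda_{i}=Bne^{F}-(A+B)\sum_{i}1/\lambda_{i}$, inserting $\chi_{i\bar i}\geq\frac{n-1}{ne^{F}}(1+\epsilon)$ from \eqref{1.8}, and invoking the elementary Lemma \ref{l2.6}: when $\frac{4}{n}\leq\frac{\alpha^{2}}{\beta}<\frac{4}{n-1}$ the quadratic relation in the $1/\lambda_{i}$ forces each $1/\lambda_{i}$ to be bounded below, hence each $\lambda_{i}$ bounded above, and the constants $A,B,\epsilon,\eta$ must be tuned as in \eqref{2.29}--\eqref{2.31} precisely to land in that window. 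This, not the third-order/torsion matching, is where the specific constant $\frac{n-1}{ne^{F}}$ in \eqref{1.8} is used, and it is also the actual mechanism of \cite{W2}. Your sketch ends with ``this yields a bound on ${\rm tr}_{\omega}\chi_{\varphi}$ at $p_{0}$'' without this step; moreover, your plan to spend part of the good term $A\Phi^{k\bar l}\chi_{k\bar l}$ on absorbing torsion is not innocuous, since the eigenvalue lemma needs (essentially) the sharp quadratic coefficient coming from \eqref{1.8} and the admissible ratio $\alpha^{2}/\beta$ couples $A$ and $B$, so one cannot simply ``take $A$ large''.

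Part (2) as you propose it has a genuine gap, and here you also diverge from the paper. In the ABP argument the contact set consists of points admitting supporting affine functions whose slopes range up to roughly half of $\inf_{\partial B}\varphi-\varphi(x_{0})$, i.e.\ up to the order of the oscillation you are trying to bound; hence on the contact set one only gets $\varphi-\inf_{X}\varphi\lesssim{\rm osc}_{X}\varphi$, not $O(1)$ as you claim. The factor $e^{2nA(\varphi-\inf_{X}\varphi)}$ coming from part (1) is therefore not uniformly bounded there, and the ABP inequality only yields $({\rm osc}_{X}\varphi)^{2n}\leq Ce^{C\,{\rm osc}_{X}\varphi}$, which is vacuous; truncating to $O(1)$ slopes does bound the exponential, but then the gradient-image estimate no longer controls the oscillation. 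Note also that without part (1) the equation gives no upper bound on $\det\chi_{\varphi}$ (only $\sum_{i}\mu_{i}^{-1}=ne^{F}$), so the Monge--Amp\`ere-type ABP argument cannot even start from the equation alone. The paper's route (Remark \ref{r1.2}) is the Cherrier/Tosatti--Weinkove observation in \cite{TW1} that a second-order bound of exactly the exponential form in part (1) implies the $C^{0}$ bound through their iteration argument; your parenthetical fallback ``Moser iteration in the spirit of \cite{TW1}'' points at the right argument, but it is then the entire proof of (2) and would need to be carried out. Part (3) as you describe it (uniform ellipticity from (1)+(2) together with $\chi_{\varphi}\geq c_{0}\omega$, concavity via operator convexity of $M\mapsto M^{-1}$, Evans--Krylov, Schauder bootstrap) agrees with Remark \ref{r1.2}.
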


Meanwhile, Guan, Li and Sun \cite{G1, GL2, GL3, GS1} considered a priori estimates for Donaldson's equation over compact Hermitian manifolds under very general structure conditions rather than the condition (\ref{1.8}).

\begin{remark}\label{r1.2} As remarked in \cite{TW1} (see page 22, line 27--28), to prove the zeroth estimate in Theorem \ref{t1.1} it suffices to show the second order estimate on $\varphi$. Our result gives an affirmative
answer to the question in \cite{TW1} (see page 22, line 28--30). Using the same argument in \cite{TW1} (page 33), we can get a $C^{\alpha}$ estimate on $\varphi$ for some $\alpha\in(0,1)$. Differentiating (\ref{1.7}) and applying the standard local elliptic estimates imply uniform $C^{\infty}$ estimates on $\varphi$.
\end{remark}

There are some natural questions about the equation (\ref{1.7}). Is condition (\ref{1.8})
sufficient to product a solution to (\ref{1.7})? When $\omega$ and $\chi$ both are K\"ahler, it has been proved in \cite{C2, W1, W2} that this condition is sufficient. The second question is to consider a
parabolic flow over compact Hermitian manifolds like the $J$-flow. Can we prove the long time existence and
convergence of such a flow? Song and Weinkove \cite{SW1} gave a
necessary and sufficient condition for existence of solutions to the
Donaldson's equation over compact K\"ahler manifolds (and also for convergence of the $J$-flow over compact K\"ahler manifolds). The last question then is whether we can find an analogous of above Song-Weinkove's condition. Those questions will be answered later.

\begin{remark}\label{r1.3} Here and henceforth, when we say a ``uniform constant'' it should be understood to be a constant that depends only on $X,\omega,\chi$, and $F$. We will often write $C$ or $C'$ for such a constant, where the value of $C$ or $C'$ may differ from line to line. For the relation $P\leq CQ$ for a uniform constant $C$ in the above sense, we write it as $P\lesssim Q$. ${\rm Re}(P)$ means the real part of $P$.
\end{remark}

{\bf Acknowledgement.} The author would like to thank Kefeng Liu, Valentino Tosatti, Xiaokui Yang for useful discussions on Donaldson's equation, the complex Monge-Amp\`ere equation and geometric flows. The author thanks referees's helpful suggestions.

%%%%%%%%%%%%%%%%%%%%%%%%%%%%%%%%%%%%%%%%%%%%%%%%%%%%%%%%%%%%%%%%%%%%%
%%%%%%%%%%%%%%%%%%%%%%%%%%%%%%%%%%%%%%%%%%%%%%%%%%%%%%%%%%%%%%%%%%%%%
\section{The second order estimates}
%%%%%%%%%%%%%%%%%%%%%%%%%%%%%%%%%%%%%%%%%%%%%%%%%%%%%%%%%%%%%%%%%%%%%
%%%%%%%%%%%%%%%%%%%%%%%%%%%%%%%%%%%%%%%%%%%%%%%%%%%%%%%%%%%%%%%%%%%%%

%%%%%%%%%%%%%%%%%%%%%%%%%%%%%%%%%%%%%%%%%%%%%%%%%%%%%%%%%%%%%%%%%%%%%
\subsection{Basic facts and notions}
%%%%%%%%%%%%%%%%%%%%%%%%%%%%%%%%%%%%%%%%%%%%%%%%%%%%%%%%%%%%%%%%%%%%%

Let $(X,\omega)$ be a complex Hermitian manifold of the complex dimension $n$ and $\chi$ another Hermitian metric on $X$. For a solution $\varphi$ of Donaldson's equation (\ref{1.7}), we denote by
\begin{equation}
\chi':=\chi+\sqrt{-1}\partial\overline{\partial}\varphi
=\sqrt{-1}(\chi_{i\bar{j}}+\varphi_{i\bar{j}})dz^{i}\wedge dz^{\bar{j}}.\label{2.1}
\end{equation}
Also, we set $\chi'_{i\bar{j}}:=\chi_{i\bar{j}}
+\varphi_{i\bar{j}}$. Then we observe that
\begin{equation}
{\rm tr}_{\chi'}\omega=n\frac{\omega\wedge(\chi')^{n-1}}{(\chi')^{n}}
=ne^{F}.\label{2.2}
\end{equation}
Consequently, ${\rm tr}_{\chi'}\omega$ is uniformly bounded away from zero and infinity. Let $\Delta_{\omega}$ denote the Laplacian operator of the Chern connection associated to the Hermitian metric $\omega$, and similarly for $\Delta_{\chi}$. Note that
\begin{equation}
{\rm tr}_{\omega}\chi'=g^{i\bar{j}}(\chi_{i\bar{j}}+\varphi_{i\bar{j}})
={\rm tr}_{\omega}\chi+\Delta_{\omega}\varphi.\label{2.3}
\end{equation}

\begin{remark}\label{r2.1} ${\rm tr}_{\omega}\chi'$ and ${\rm tr}_{\chi'}\omega$ are uniformly bounded from below away from zero. More precisely,
\begin{equation}
{\rm tr}_{\omega}\chi'\geq\frac{n}{e^{F}}, \ \ \ {\rm tr}_{\chi'}\omega=ne^{F}.
\end{equation}
The second assertion follows from (\ref{2.2}), while the first inequality is obtained as follows. We choose a normal coordinate system so that
\begin{equation*}
g_{i\bar{j}}=\delta_{ij}, \ \ \ \chi'_{i\bar{j}}=\lambda'_{i}\delta_{ij}
\end{equation*}
for some $\lambda'_{i},\cdots,\lambda'_{n}>0$. Donaldson's equation then yields
\begin{equation*}
ne^{F}=\sum_{1\leq i\leq n}\frac{1}{\lambda'_{i}}.
\end{equation*}
An elementary inequality shows that
\begin{equation*}
{\rm tr}_{\omega}\chi'=\sum_{1\leq i\leq n}\lambda'_{i}
\geq\frac{n^{2}}{\sum_{1\leq i\leq n}\frac{1}{\lambda'_{i}}}
=\frac{n^{2}}{ne^{F}}=\frac{n}{e^{F}}.
\end{equation*}
\end{remark}

We will frequently use the following

\begin{lemma}{\rm (Guan-Li \cite{GL1})}\label{l2.2} At any point $p\in X$ there exists a holomorphic coordinates system centered at $p$ such that, at $p$,
\begin{equation}
g_{i\bar{j}}=\delta_{ij}, \ \ \ \partial_{j}g_{i\bar{i}}=0\label{2.5}
\end{equation}
for all $i$ and $j$. Furthermore, we can assume that $\chi'_{i\bar{j}}$ is diagonal.
\end{lemma}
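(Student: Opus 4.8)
The plan is to produce the required coordinates in two stages: first a linear change making both forms normal at $p$, and then a holomorphic quadratic change that removes the prescribed first derivatives of $g$. For the linear stage, recall that $\omega$ is a positive $(1,1)$-form while $\chi'$ is a real $(1,1)$-form, so simultaneous diagonalisation of a Hermitian inner product together with a Hermitian form furnishes complex linear coordinates $z^{1},\dots,z^{n}$ centred at $p$ with $g_{i\bar j}(p)=\delta_{ij}$ and $\chi'_{i\bar j}(p)=\lambda'_{i}\delta_{ij}$ (the $\lambda'_{i}$ being positive by positivity of $\chi'$, consistent with Remark \ref{r2.1}). Crucially, any further coordinate change whose differential at $p$ is the identity preserves all of these pointwise identities, so it remains only to correct the first derivatives of $g$ at $p$.

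For the quadratic stage I would pass to new coordinates $w^{i}=z^{i}+\tfrac12\sum_{k,l}c^{i}_{kl}z^{k}z^{l}$ with $c^{i}_{kl}=c^{i}_{lk}$, which is a local biholomorphism near $p$ since its differential there is the identity, and which by the previous paragraph keeps $g_{i\bar j}(p)=\delta_{ij}$ and $\chi'_{i\bar j}(p)$ diagonal. Writing $\tilde g$ for the metric in the $w$-coordinates, the transformation law $\tilde g_{a\bar b}=g_{r\bar s}\,(\partial z^{r}/\partial w^{a})\,\overline{(\partial z^{s}/\partial w^{b})}$ gives, after a one-line computation using $\partial z^{r}/\partial w^{a}=\delta^{r}_{a}-c^{r}_{al}w^{l}+O(|w|^{2})$, the identity $\partial_{k}\tilde g_{i\bar j}(p)=\partial_{k}g_{i\bar j}(p)-c^{j}_{ik}$ at $p$. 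Choosing $c^{i}_{ik}:=\partial_{k}g_{i\bar i}(p)$ for all $i,k$ and setting every remaining component of $c$ equal to zero then yields $\partial_{k}\tilde g_{i\bar i}(p)=0$ for all $i$ and $k$, which together with the surviving pointwise normalisations is exactly the assertion.

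The point that needs care — the only thing beyond routine bookkeeping — is that, in contrast to the Kähler case, one cannot annihilate all of $\partial_{k}g_{i\bar j}(p)$: the antisymmetric combination $\partial_{k}g_{i\bar j}-\partial_{i}g_{k\bar j}$ is essentially the Chern torsion and transforms tensorially, hence is a genuine obstruction. The lemma asks only for the diagonal derivatives $\partial_{k}g_{i\bar i}(p)$ to vanish, so one must check that the prescription $c^{i}_{ik}=\partial_{k}g_{i\bar i}(p)$ is compatible with the symmetry $c^{i}_{kl}=c^{i}_{lk}$: two of these equations refer to the same coefficient only when they share the upper index $i$ and the unordered pair of lower indices $\{i,k\}$, in which case they are literally identical, so the system is always solvable (the components of $c$ not thereby determined are simply taken to be zero). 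The derivation of the transformation formula for $\partial_{k}\tilde g_{i\bar j}$ and the invariance of the pointwise data under a quadratic change are straightforward and I would dispatch them with a short computation.
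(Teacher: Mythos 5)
Your proof is correct: the linear simultaneous diagonalisation followed by the quadratic change $w^{i}=z^{i}+\tfrac12 c^{i}_{kl}z^{k}z^{l}$ with $c^{i}_{ik}=\partial_{k}g_{i\bar i}(p)$ is exactly the standard argument, and your consistency check of the symmetric coefficients and the observation that only the diagonal derivatives (not the torsion components $\partial_{k}g_{i\bar j}-\partial_{i}g_{k\bar j}$) can be killed are the right points to address. The paper itself gives no proof, simply citing Guan--Li \cite{GL1}, whose argument is essentially the one you wrote.
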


Let $\widetilde{\Delta}$ denote the Laplacian operator associated to the Hermitian metric $h_{i\bar{j}}$ whose inverse matrix is given by
\begin{equation}
h^{i\bar{j}}:=\chi'^{i\bar{\ell}}\chi'^{k\bar{j}}g_{k\bar{\ell}};\label{2.6}
\end{equation}
and $\widetilde{\nabla}$ the associated covariant derivatives.

The basic idea to obtain the second order estimate, following from the method of Yau \cite{Yau1}, is to consider the quantity
\begin{equation}
Q:=\log({\rm tr}_{\omega}\chi')-A\varphi\label{2.7}
\end{equation}
for some suitable constant $A$. Our first step is to estimate the term $\widetilde{\Delta}\log({\rm tr}_{\omega}\chi')$.

\begin{definition}\label{d2.3} For convenience, we say that a term $E$ is of {\bf type I} if
\begin{equation}
|E|_{\omega}\lesssim1,\label{2.8}
\end{equation}
and is of {\bf type II} if
\begin{equation}
|E|_{\omega}\lesssim{\rm tr}_{\omega}\chi'.\label{2.9}
\end{equation}
\end{definition}

It is east to see that any uniform constant is of type I and any type I term is of type II. We will use $E_{1}$ and $E_{2}$ to denote a type I and type II term, respectively.

%%%%%%%%%%%%%%%%%%%%%%%%%%%%%%%%%%%%%%%%%%%%%%%%%%%%%%%%%%%%%%%%%%%%%
\subsection{The estimate for $\widetilde{\Delta}\log({\rm tr}_{\omega}\chi')$}
%%%%%%%%%%%%%%%%%%%%%%%%%%%%%%%%%%%%%%%%%%%%%%%%%%%%%%%%%%%%%%%%%%%%%

Direct computation shows
\begin{equation}
\widetilde{\Delta}\log({\rm tr}_{\omega}\chi')
=\frac{\widetilde{\Delta}{\rm tr}_{\omega}\chi'}{{\rm tr}_{\omega}\chi'}
-\frac{|\widetilde{\nabla}{\rm tr}_{\omega}\chi'|^{2}_{h}}{({\rm tr}_{\omega}\chi')^{2}}.\label{2.10}
\end{equation}
By the definition, we have
\begin{eqnarray*}
\widetilde{\Delta}{\rm tr}_{\omega}\chi'&=&
h^{i\bar{j}}\partial_{i}\partial_{\bar{j}}(g^{k\bar{\ell}}
\chi'_{k\bar{\ell}})\\
&=&h^{i\bar{j}}\partial_{i}\left(-g^{k\bar{b}}g^{a\bar{\ell}}
\partial_{\bar{j}}g_{a\bar{b}}\cdot\chi'_{k\bar{\ell}}
+g^{k\bar{\ell}}\partial_{\bar{j}}\chi'_{k\bar{\ell}}\right)\\
&=&h^{i\bar{j}}
\left[g^{k\bar{\ell}}\partial_{i}\partial_{\bar{j}}
\chi'_{k\bar{\ell}}-g^{k\bar{b}}g^{a\bar{\ell}}
\partial_{i}g_{a\bar{b}}\cdot\partial_{\bar{j}}\chi'_{k\bar{\ell}}
-g^{k\bar{b}}g^{a\bar{\ell}}\partial_{\bar{j}}
g_{a\bar{b}}\cdot\partial_{i}\chi'_{k\bar{\ell}}\right.\\
&& \ -\left(-g^{k\bar{q}}g^{p\bar{b}}
\partial_{i}g_{p\bar{q}}\cdot g^{a\bar{\ell}}\partial_{\bar{j}}
g_{a\bar{b}}-g^{k\bar{b}}g^{a\bar{q}}g^{p\bar{\ell}}
\partial_{i}g_{p\bar{q}}\cdot\partial_{\bar{j}}g_{a\bar{b}}\right.\\
&& \ +\left.\left.g^{k\bar{b}}g^{a\bar{\ell}}\partial_{i}\partial_{\bar{j}}
g_{a\bar{b}}\right)\chi'_{k\bar{\ell}}\right].
\end{eqnarray*}
Using the local coordinates in Lemma \ref{l2.2}, we deduce that
\begin{eqnarray}
\widetilde{\Delta}{\rm tr}_{\omega}\chi'&=&\sum_{1\leq i,j\leq n}h^{i\bar{i}}
\partial_{i}\partial_{\bar{i}}\chi'_{k\bar{k}}-\sum_{1\leq i,k,\ell\leq n}
h^{i\bar{i}}\partial_{i}g_{\ell\bar{k}}\cdot\partial_{\bar{i}}
\chi'_{k\bar{\ell}}\nonumber\\
&&- \ \sum_{1\leq i,k,\ell\leq n}h^{i\bar{i}}\partial_{\bar{i}}
g_{\ell\bar{k}}\cdot\partial_{i}\chi'_{k\bar{\ell}}
+\sum_{1\leq i,k,p\leq n}h^{i\bar{i}}\partial_{i}g_{p\bar{k}}
\cdot\partial_{\bar{i}}g_{k\bar{p}}\cdot\chi'_{k\bar{k}}\nonumber\\
&&+ \ \sum_{1\leq i,k,q\leq n}h^{i\bar{i}}
\partial_{i}g_{k\bar{q}}\cdot\partial_{\overline{i}}g_{q\bar{k}}
\cdot\chi'_{k\bar{k}}-\sum_{1\leq i,k\leq n}
h^{i\bar{i}}\partial_{i}\partial_{\bar{i}}g_{k\bar{k}}
\cdot\chi'_{k\bar{k}}\label{2.11}\\
&=&\sum_{1\leq i,k\leq n}h^{i\bar{i}}\partial_{i}\partial_{\bar{i}}
\chi'_{k\bar{k}}-2\cdot{\rm Re}\left(\sum_{1\leq i,j,k\leq n}h^{i\bar{i}}
\partial_{\bar{i}}g_{j\bar{k}}\cdot\partial_{i}\chi'_{k\bar{j}}
\right)+E_{1},\nonumber
\end{eqnarray}
where
\begin{eqnarray*}
E_{1}&=&\sum_{1\leq i,j,k\leq n}h^{i\bar{i}}\partial_{i}g_{j\bar{k}}
\cdot\partial_{\bar{i}}g_{k\bar{j}}\cdot\chi'_{k\bar{k}}
+\sum_{1\leq i,j,k\leq n}h^{i\bar{i}}\partial_{i}g_{k\bar{j}}
\cdot\partial_{\bar{i}}g_{j\bar{k}}\cdot\chi'_{k\bar{k}}\\
&&- \ \sum_{1\leq i,k\leq n}h^{i\bar{i}}\partial_{i}\partial_{\bar{i}}
g_{k\bar{k}}\cdot\chi'_{k\bar{k}}.
\end{eqnarray*}
Since under the above mentioned local coordinates $\chi'_{i\bar{i}}
=\lambda'_{i}\delta_{ij}$, it follows that $h^{i\bar{i}}=
(\chi'^{i\bar{i}})^{2}=1/\lambda'^{2}_{i}$; hence $h^{i\bar{i}}\leq e^{2F}$ using Remark \ref{r2.1}. Therefore we
see that $E_{1}$ is of type II, i.e.,
\begin{equation}
|E_{1}|_{\omega}\lesssim{\rm tr}_{\omega}\chi'.\label{2.12}
\end{equation}

The first term on the right hand side of (\ref{2.11}) can be computed as
follows: From Donaldson's equation (\ref{1.7}), we obtain
\begin{equation*}
ne^{F}={\rm tr}_{\chi'}\omega=\chi'^{i\bar{j}}g_{i\bar{j}}
\end{equation*}
and, after taking the derivative with respect to $z^{\bar{\ell}}$,
\begin{equation*}
n\partial_{\bar{\ell}}F\cdot e^{F}=-\chi'^{i\bar{b}}
\chi'^{a\bar{j}}\partial_{\bar{\ell}}\chi'_{a\bar{b}}
\cdot g_{i\bar{j}}+\chi'^{i\bar{j}}\partial_{\bar{\ell}}g_{i\bar{j}}.
\end{equation*}
Differentiating above equation again with respect to $z^{k}$ yields
\begin{eqnarray*}
&&n\partial_{k}\partial_{\bar{\ell}}F\cdot e^{F}
+n\partial_{\bar{\ell}}F\partial_{k}F\cdot e^{F} \ \ = \ \ -\chi'^{i\bar{b}}\chi'^{a\bar{j}}
g_{i\bar{j}}\partial_{k}\partial_{\bar{\ell}}\chi'_{a\bar{b}}
-\chi'^{i\bar{b}}\chi'^{a\bar{j}}\partial_{\bar{\ell}}
\chi'_{a\bar{b}}\partial_{k}g_{i\bar{j}}\\
&& \ \ \ \ \ \ \ \ \ \ \ - \ \left(-\chi'^{i\bar{q}}\chi'^{p\bar{b}}
\partial_{k}\chi'_{p\bar{q}}\cdot\chi'^{a\bar{j}}g_{i\bar{j}}
-\chi'^{i\bar{b}}\chi'^{a\bar{q}}
\chi'^{p\bar{j}}\partial_{k}\chi'_{p\bar{q}}\cdot g_{i\bar{j}}\right)
\partial_{\bar{\ell}}\chi'_{a\bar{b}}\\
&& \ \ \ \ \ \ \ \ \ \ \ - \ \chi'^{i\bar{b}}\chi'^{a\bar{j}}
\partial_{k}\chi'_{a\bar{b}}\cdot\partial_{\bar{\ell}}g_{i\bar{j}}
+\chi'^{i\bar{j}}\partial_{k}\partial_{\bar{\ell}}g_{i\bar{j}}\\
&=&-\chi'^{i\bar{b}}\chi'^{a\bar{j}}g_{i\bar{j}}
\partial_{k}\partial_{\bar{\ell}}\chi'_{a\bar{b}}
-\chi'^{i\bar{b}}\chi'^{a\bar{j}}\partial_{k}\chi'_{a\bar{b}}
\cdot\partial_{\overline{\ell}}g_{i\bar{j}}+\chi'^{i\bar{j}}
\partial_{k}\partial_{\bar{\ell}}g_{i\bar{j}}\\
&&- \ \left(-\chi'^{i\bar{q}}\chi'^{p\bar{b}}
\partial_{k}\chi'_{p\bar{q}}\cdot\chi'^{a\bar{j}}g_{i\bar{j}}
-\chi'^{i\bar{b}}\chi'^{a\bar{q}}\partial_{k}\chi'_{p\bar{q}}
\cdot g_{i\bar{j}}+\chi'^{i\bar{b}}\chi'^{a\bar{j}}
\partial_{k}g_{i\bar{j}}\right)
\partial_{\bar{\ell}}\chi'_{a\bar{b}}.
\end{eqnarray*}
Multiplying above by $g^{k\bar{\ell}}$ on both sides implies
\begin{eqnarray*}
&&\left(\Delta_{\omega}F+|\nabla F|^{2}_{\omega}\right)ne^{F} \ \ = \ \ -\sum_{1\leq i,j,k,\ell\leq n}\left(h^{i\bar{j}}
g^{k\bar{\ell}}\partial_{k}\partial_{\bar{\ell}}\chi'_{i\bar{j}}-\chi'^{i\bar{j}}
g^{k\bar{\ell}}\partial_{k}\partial_{\bar{\ell}}g_{i\bar{j}}\right)\\
&&- \ \sum_{1\leq i,j,k,\ell,a,b\leq n}
\chi'^{i\bar{b}}\chi'^{a\bar{j}}g^{k\bar{\ell}}\partial_{k}
\chi'_{a\bar{b}}\cdot\partial_{\bar{\ell}}g_{i\bar{j}}
+\sum_{1\leq i,j,k,\ell, p, q\leq n}h^{i\bar{q}}\chi'^{p\bar{j}}
g^{k\bar{\ell}}\partial_{k}\chi'_{p\bar{q}}\cdot\partial_{\bar{\ell}}
\chi'_{i\bar{j}}\\
&&+\sum_{1\leq i,j,k,\ell,p,q\leq n}h^{p\bar{j}}
\chi'^{i\bar{q}}g^{k\bar{\ell}}\partial_{k}\chi'_{p\bar{q}}
\cdot\partial_{\bar{\ell}}\chi'_{i\bar{j}}
-\sum_{1\leq i,j,k,\ell,a,b\leq n}\chi'^{i\bar{b}}
\chi'^{a\bar{j}}g^{k\bar{\ell}}\partial_{\bar{k}}
g_{i\bar{j}}\cdot\partial_{\bar{\ell}}\chi'_{a\bar{b}}.
\end{eqnarray*}
Using the local coordinates (\ref{2.5}) we arrive at
\begin{eqnarray*}
&&\left(\Delta_{\omega}F+|\nabla F|^{2}_{\omega}\right)ne^{F}\\
&=&- \ \sum_{1\leq i,k\leq n}h^{i\bar{i}}\partial_{k}\partial_{\bar{k}}
\chi'_{i\bar{i}}+\sum_{1\leq i,k\leq n}\chi'^{i\bar{i}}\partial_{k}
\partial_{\bar{k}}g_{i\bar{i}}+\sum_{1\leq i,j,k\leq n}h^{i\bar{i}}
\chi'^{j\bar{j}}\partial_{k}\chi'_{j\bar{i}}\cdot\partial_{\bar{k}}
\chi'_{i\bar{j}}\\
&&+ \ \sum_{1\leq i,j,k\leq n}h^{i\bar{i}}\chi'^{j\bar{j}}
\partial_{k}
\chi'_{i\bar{j}}\cdot\partial_{\bar{k}}\chi'_{j\bar{i}}
-2\cdot{\rm Re}\left(\sum_{1\leq i,j,k\leq n}\chi'^{i\bar{i}}
\chi'^{j\bar{j}}\partial_{k}g_{i\bar{j}}\cdot\partial_{\bar{k}}
\chi'_{j\bar{i}}\right).
\end{eqnarray*}
Equivalently,
\begin{eqnarray}
\sum_{1\leq i,k\leq n}h^{i\bar{i}}\partial_{k}\partial_{\bar{k}}
\chi'_{i\bar{i}}&=&\sum_{1\leq i,j,k\leq n}h^{i\bar{i}}
\chi'^{j\bar{j}}\partial_{k}\chi'_{j\bar{i}}\partial_{\bar{k}}\chi'_{i\bar{j}}+
\sum_{1\leq i,j,k\leq n}h^{i\bar{i}}\chi'^{j\bar{j}}\partial_{k}
\chi'_{i\bar{j}}\partial_{\bar{k}}\chi'_{j\bar{i}}\nonumber\\
&&- \ 2\cdot{\rm Re}\left(\sum_{1\leq i,j,k\leq n}\chi'^{i\bar{i}}
\chi'^{j\bar{j}}\partial_{k}g_{i\bar{j}}\cdot\partial_{\bar{k}}
\chi'_{j\bar{i}}\right)\label{2.13}\\
&&+ \ \sum_{1\leq i,k\leq n}\chi'^{i\bar{i}}\partial_{k}
\partial_{\bar{k}}g_{i\bar{i}}-\left(\Delta_{\omega}F+|\nabla F|^{2}_{\omega}\right)
ne^{F}.\nonumber
\end{eqnarray}
Since
\begin{eqnarray*}
\partial_{k}\partial_{\bar{k}}\chi'_{i\bar{i}}&=&\partial_{k}\partial_{\bar{k}}
\left(\chi_{i\bar{i}}+\varphi_{i\bar{i}}\right)\\
&=&\partial_{k}
\partial_{\bar{k}}\chi_{i\bar{i}}+\partial_{k}
\partial_{\bar{k}}
\varphi_{i\bar{i}}\\
&=&\partial_{k}\partial_{\bar{k}}\chi_{i\bar{i}}
+\partial_{i}\partial_{\bar{i}}\varphi_{k\bar{k}}\\
&=&\partial_{k}\partial_{\bar{k}}\chi_{i\bar{i}}+\partial_{i}\partial_{\bar{i}}
\left(\chi'_{k\bar{k}}-\chi_{k\bar{k}}\right)\\
&=&\partial_{i}\partial_{\bar{i}}\chi'_{k\bar{k}}
+\left(\partial_{k}\partial_{\bar{k}}\chi_{i\bar{i}}-\partial_{i}
\partial_{\bar{i}}\chi_{k\bar{k}}\right),
\end{eqnarray*}
we conclude that
\begin{equation}
\sum_{1\leq i,k\leq n}h^{i\bar{i}}\partial_{i}\partial_{\bar{i}}
\chi'_{k\bar{k}}=\sum_{1\leq i,k\leq n}h^{i\bar{i}}\partial_{k}\partial_{\bar{k}}\chi'_{i\bar{i}}+\sum_{1\leq i,k\leq n}h^{i\bar{i}}
\left(\partial_{i}\partial_{\bar{i}}
\chi_{k\bar{k}}-\partial_{k}
\partial_{\bar{k}}\chi_{i\bar{i}}\right).\label{2.14}
\end{equation}
Combining (\ref{2.13}) and (\ref{2.14}) yields
\begin{eqnarray}
\sum_{1\leq i,k\leq n}h^{i\bar{i}}\partial_{i}\partial_{\bar{i}}
\chi'_{k\bar{k}}&=&\sum_{1\leq i,j,k\leq n}h^{i\bar{i}}
\chi'^{j\bar{j}}\partial_{k}\chi'_{j\bar{i}}\cdot\partial_{\bar{k}}
\chi'_{i\bar{j}}\nonumber\\
&&+ \ \sum_{1\leq i,j,k\leq n}h^{i\bar{i}}
\chi'^{j\bar{j}}\partial_{k}\chi'_{i\bar{j}}\partial_{\bar{k}}
\chi'_{j\bar{i}}\label{2.15}\\
&&- \ 2\cdot{\rm Re}\left(\sum_{1\leq i,j,k\leq n}
\chi'^{i\bar{i}}\chi'^{j\bar{j}}\partial_{k}g_{j\bar{i}}
\cdot\partial_{\bar{k}}\chi'_{i\bar{j}}\right)+E_{2},
\nonumber
\end{eqnarray}
where
\begin{equation*}
E_{2}=\sum_{1\leq i,k\leq n}\chi'^{i\bar{i}}
\partial_{k}\partial_{\bar{k}}g_{i\bar{i}}+\sum_{1\leq i,k\leq n}
h^{i\bar{i}}\left(\partial_{i}\partial_{\bar{i}}
\chi_{k\bar{k}}-\partial_{k}\partial_{\bar{k}}\chi_{i\bar{i}}\right)
-\left(\Delta_{\omega}F+|\nabla F|^{2}_{\omega}\right)ne^{F}.
\end{equation*}
By the same reason that $\chi'^{i\bar{i}}\leq e^{F}$ and $h^{i\bar{i}}\leq e^{2F}$, we observe
that $E_{2}$ is of type I and
\begin{equation}
|E_{2}|_{\omega}\lesssim1.\label{2.16}
\end{equation}
From (\ref{2.11}) and (\ref{2.15}), we get
\begin{eqnarray*}
\widetilde{\Delta}{\rm tr}_{\omega}\chi'&=&\sum_{1\leq i,j,k\leq n}
h^{i\bar{i}}\chi'^{j\bar{j}}\partial_{k}\chi'_{j\bar{i}}
\partial_{\bar{k}}\chi'_{i\bar{j}}+\sum_{1\leq i,j,k\leq n}
h^{i\bar{i}}\chi'^{j\bar{j}}\partial_{k}\chi'_{i\bar{j}}
\partial_{\bar{k}}\chi'_{j\bar{i}}\\
&&- \ 2\cdot{\rm Re}\left(\sum_{1\leq i,j,k\leq n}\chi'^{i\bar{i}}
\chi'^{j\bar{j}}\partial_{k}g_{j\bar{i}}\partial_{\bar{k}}
\chi'_{i\bar{j}}\right)\\
&&- \ 2\cdot{\rm Re}\left(\sum_{1\leq i,j,k\leq n}h^{i\bar{i}}
\partial_{\bar{i}}g_{j\bar{k}}\partial_{i}\chi'_{k\bar{j}}
\right)+E_{1}+E_{2}\\
&=&\sum_{1\leq i,j,k\leq n}h^{i\bar{i}}\chi'^{j\bar{j}}
\partial_{k}\chi'_{j\bar{i}}\partial_{\bar{k}}\chi'_{i\bar{j}}
+\sum_{1\leq i,j,k\leq n}h^{i\bar{i}}\chi'^{j\bar{j}}
\partial_{k}\chi'_{i\bar{j}}\partial_{\bar{k}}\chi'_{j\bar{i}}\\
&&- \  2\cdot{\rm Re}\left(\sum_{1\leq i,j,k\leq n}\chi'^{i\bar{i}}
\chi'^{j\bar{j}}\partial_{k}g_{j\bar{i}}\partial_{\bar{k}}
\chi'_{i\bar{j}}\right)\\
&&- \ 2\cdot{\rm Re}\left(\sum_{1\leq i,j,k\leq n}h^{i\bar{i}}
\partial_{\bar{i}}g_{j\bar{k}}\partial_{i}\chi'_{k\bar{j}}
\right)+E_{2},
\end{eqnarray*}
since any type I term is also of type II.

%%%%%%%%%%%%%%%%%%%%%%%%%%%%%%%%%%%%%%%%%%%%%%%%%%%%%%%%%%%%%%%%%%%%%%%%%%%%%%
\subsection{The estimate for
$\widetilde{\Delta}{\rm log}({\rm tr}_{\omega}\chi')$, continued: $\omega$ is
K\"ahler}\label{subsection8.2.3}
%%%%%%%%%%%%%%%%%%%%%%%%%%%%%%%%%%%%%%%%%%%%%%%%%%%%%%%%%%%%%%%%%%%%%%%%%%%%%%

In the case that $\omega$ is K\"ahler, we in addition have $\partial_{k}g_{ij}=0$ for any $i, j, k$ in Lemma \ref{l2.2}, and we deduce from the above equation that
\begin{equation}
\widetilde{\Delta}{\rm tr}_{\omega}\chi'=\sum_{1\leq i,j,k\leq n}
h^{i\bar{i}}\chi'^{j\bar{j}}\partial_{k}\chi'_{j\bar{i}}
\partial_{\bar{k}}\chi'_{i\bar{j}}
+\sum_{1\leq i,j,k\leq n}h^{i\bar{i}}\chi'^{j\bar{j}}
\partial_{k}\chi'_{i\bar{j}}\partial_{\bar{k}}\chi'_{j\bar{i}}
+E_{2}.\label{2.17}
\end{equation}
It remains to control the term $|\widetilde{\nabla}{\rm tr}_{\omega}\chi'|^{2}_{h}/({\rm tr}_{\omega}
\chi')^{2}$. Notice that
\begin{equation*}
\partial_{i}\left({\rm tr}_{\omega}\chi'\right)
=\partial_{i}\left(g^{k\bar{\ell}}\chi'_{k\bar{\ell}}\right)
=g^{k\bar{\ell}}\partial_{i}\chi'_{k\bar{\ell}}
=\sum_{1\leq k\leq n}\partial_{i}\chi'_{k\bar{k}}.
\end{equation*}
As in \cite{TW1}, we first give an inequality for $|\widetilde{\nabla}{\rm tr}_{\omega}\chi'|^{2}_{h}/{\rm tr}_{\omega}\chi'$ and then we control the term ${\rm Re}(\sum_{1\leq i,j\leq n}
h^{i\bar{i}}\chi'^{j\bar{j}}\partial_{i}\chi'_{j\bar{j}}
(\partial_{i}\chi_{j\bar{j}}-\partial_{\bar{j}}\chi_{j\bar{i}}))$. From
\begin{eqnarray*}
&&\frac{|\widetilde{\nabla}{\rm tr}_{\omega}\chi'|^{2}_{h}}{{\rm tr}_{\omega}\chi'} \ \
= \ \ \sum_{1\leq i,j,k\leq n}\frac{h^{i\bar{i}}
\partial_{i}\chi'_{j\bar{j}}\partial_{\bar{i}}
\chi'_{k\bar{k}}}{{\rm tr}_{\omega}\chi'} \ \ = \ \ \sum_{1\leq j,k,i\leq n}\frac{\sqrt{h^{i\bar{i}}}\partial_{i}\chi'_{j\bar{j}}
\sqrt{h^{i\bar{i}}}\partial_{\bar{i}}
\chi'_{k\bar{k}}}{{\rm tr}_{\omega}\chi'}\\
&\leq&\frac{1}{{\rm tr}_{\omega}\chi'}
\sum_{1\leq j,k\leq n}\left(\sum_{1\leq i\leq n}h^{i\bar{i}}
|\partial_{i}\chi'_{j\bar{j}}|^{2}\right)^{1/2}
\left(\sum_{1\leq i\leq n}h^{i\bar{i}}|\partial_{i}\chi'_{k\bar{k}}|^{2}\right)^{1/2}\\
&=&\frac{1}{{\rm tr}_{\omega}\chi'}
\left[\sum_{1\leq j\leq n}\left(\sum_{1\leq i\leq n}h^{i\bar{i}}
|\partial_{i}\chi'_{j\bar{j}}|^{2}\right)^{1/2}\right]^{2}\\
&=&\frac{1}{{\rm tr}_{\omega}\chi'}
\left[\sum_{1\leq j\leq n}\sqrt{\chi'_{j\bar{j}}}
\left(\sum_{1\leq i\leq n}h^{i\bar{i}}\chi'^{j\bar{j}}
|\partial_{i}\chi'_{j\bar{j}}|^{2}\right)^{1/2}\right]^{2}\\
&\leq&\sum_{1\leq i,j\leq n}h^{i\bar{i}}\chi'^{j\bar{j}}
|\partial_{i}\chi'_{j\bar{j}}|^{2} \ \ = \ \ \sum_{1\leq i,j\leq n}
h^{i\bar{i}}\chi'^{j\bar{j}}\partial_{i}\chi'_{j\bar{j}}
\partial_{\bar{i}}\chi'_{j\bar{j}}.
\end{eqnarray*}
From
\begin{eqnarray*}
\partial_{i}\chi'_{j\bar{j}}&=&\partial_{i}\left(\chi_{j\bar{j}}
+\varphi_{j\bar{j}}\right) \ \ = \ \ \partial_{i}\chi_{j\bar{j}}
+\partial_{j}\varphi_{i\bar{j}} \ \ = \ \ \partial_{i}\chi_{j\bar{j}}
-\partial_{j}\chi_{i\bar{j}}+\partial_{j}\chi'_{i\bar{j}},\\
\partial_{\bar{i}}\chi'_{j\bar{j}}&=&\partial_{\bar{i}}
\left(\chi_{j\bar{j}}+\varphi_{j\bar{j}}\right) \ \ = \ \
\partial_{\bar{i}}\chi_{j\bar{j}}+\partial_{\bar{j}}
\varphi_{j\bar{i}} \ \ = \ \ \partial_{\bar{i}}\chi_{j\bar{j}}
-\partial_{\bar{j}}\chi_{j\bar{i}}+\partial_{\bar{j}}
\chi'_{j\bar{i}},
\end{eqnarray*}
it follows that
\begin{eqnarray}
\frac{|\widetilde{\nabla}{\rm tr}_{\omega}\chi'|^{2}_{h}}{{\rm tr}_{\omega}\chi'}
&\leq&\sum_{1\leq i,j\leq n}h^{i\bar{i}}
\chi'^{j\bar{j}}\left(\partial_{j}\chi'_{i\bar{j}}
+\partial_{i}\chi_{j\bar{j}}-\partial_{j}\chi_{i\bar{j}}\right)
\left(\partial_{\bar{j}}\chi'_{j\bar{i}}+\partial_{\bar{i}}
\chi_{j\bar{j}}-\partial_{\bar{j}}\chi_{j\bar{i}}\right)\nonumber\\
&=&\sum_{1\leq i,j\leq n}h^{i\bar{i}}
\chi'^{j\bar{j}}\partial_{j}\chi'_{i\bar{j}}\partial_{\bar{j}}
\chi'_{j\bar{i}}+\sum_{1\leq i,j\leq n}
h^{i\bar{i}}\chi'^{j\bar{j}}\left|\partial_{i}\chi_{j\bar{j}}
-\partial_{j}\chi_{i\bar{j}}\right|^{2}\label{2.18}\\
&&+ \ 2\cdot{\rm Re}\left[\sum_{1\leq i,j\leq n}h^{i\bar{i}}\chi'^{j\bar{j}}
\partial_{j}\chi'_{i\bar{j}}\left(\partial_{\bar{i}}
\chi_{j\bar{j}}-\partial_{\bar{j}}\chi_{j\bar{i}}\right)
\right].\nonumber
\end{eqnarray}
Note that
\begin{equation}
\partial_{j}\chi'_{i\bar{j}}
=\partial_{j}\left(\chi_{i\bar{j}}
+\varphi_{i\bar{j}}\right)=\partial_{j}\chi_{i\bar{j}}
+\partial_{i}\varphi_{j\bar{j}}=\partial_{j}\chi_{i\bar{j}}
-\partial_{i}\chi_{j\bar{j}}+\partial_{i}\chi'_{j\bar{j}}.
\label{2.19}
\end{equation}
Substituting (\ref{2.19}) into (\ref{2.18}) we obtain
\begin{eqnarray}
&&\frac{|\widetilde{\nabla}{\rm tr}_{\omega}\chi'|^{2}_{h}}{{\rm tr}_{\omega}
\chi'} \ \ \leq \ \ \sum_{1\leq i,j\leq n}h^{i\bar{i}}\chi'^{j\bar{j}}
\partial_{j}\chi'_{i\bar{j}}\partial_{\bar{j}}\chi'_{j\bar{i}}
-\sum_{1\leq i,j\leq n}h^{i\bar{i}}\chi'^{j\bar{j}}
\left|\partial_{i}\chi_{j\bar{j}}-\partial_{j}\chi_{i\bar{j}}
\right|^{2}\nonumber\\
&& \ \ \ \ \ \ \ + \ 2\cdot{\rm Re}\left[\sum_{1\leq i,j\leq n}
h^{i\bar{i}}\chi'^{j\bar{j}}\partial_{i}\chi'_{j\bar{j}}
\left(\partial_{\bar{i}}\chi_{j\bar{j}}-\partial_{\bar{j}}
\chi_{j\bar{i}}\right)\right]\label{2.20}\\
&\leq&\sum_{1\leq i,j\leq n}h^{i\bar{i}}
\chi'^{j\bar{j}}\partial_{j}\chi'_{i\bar{j}}
\partial_{\bar{j}}\chi'_{j\bar{i}}+2\cdot{\rm Re}
\left[\sum_{1\leq i,j\leq n}h^{i\bar{i}}\chi'^{j\bar{j}}
\partial_{i}\chi'_{j\bar{j}}\left(\partial_{\bar{i}}
\chi_{j\bar{j}}-\partial_{\bar{j}}\chi_{j\bar{i}}\right)\right].\nonumber
\end{eqnarray}

\begin{lemma}\label{l2.4} If $\omega$ is K\"ahler, then $\widetilde{\Delta}{\rm log}
({\rm tr}_{\omega}\chi')\gtrsim-1$.
\end{lemma}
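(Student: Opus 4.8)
The plan is to combine the two formulas established just above, namely the expression for $\widetilde{\Delta}\,{\rm tr}_\omega\chi'$ in the K\"ahler case (equation (\ref{2.17})) and the upper bound for $|\widetilde{\nabla}{\rm tr}_\omega\chi'|^2_h/{\rm tr}_\omega\chi'$ in (\ref{2.20}), inside the Bochner-type identity (\ref{2.10}). Writing $S:={\rm tr}_\omega\chi'$, we have
\begin{equation*}
\widetilde{\Delta}\log S=\frac{\widetilde{\Delta}S}{S}-\frac{|\widetilde{\nabla}S|^2_h}{S^2}
=\frac{1}{S}\left(\widetilde{\Delta}S-\frac{|\widetilde{\nabla}S|^2_h}{S}\right),
\end{equation*}
so it suffices to show that the quantity in parentheses is $\gtrsim -S$, since $1/S$ is uniformly bounded by $e^F/n$ (Remark \ref{r2.1}) but we will in fact get a bound of the form $\widetilde{\Delta}\log S\gtrsim -1$ directly once we see what cancels.

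The key step is the cancellation of the ``bad'' third-derivative terms. From (\ref{2.17}), $\widetilde{\Delta}S$ contains the two nonnegative gradient-square terms
$\sum h^{i\bar i}\chi'^{j\bar j}\partial_k\chi'_{j\bar i}\partial_{\bar k}\chi'_{i\bar j}$ and
$\sum h^{i\bar i}\chi'^{j\bar j}\partial_k\chi'_{i\bar j}\partial_{\bar k}\chi'_{j\bar i}$ (each manifestly $\ge 0$), plus a type I error $E_2$. From (\ref{2.20}), $|\widetilde{\nabla}S|^2_h/S$ is bounded above by $\sum h^{i\bar i}\chi'^{j\bar j}\partial_j\chi'_{i\bar j}\partial_{\bar j}\chi'_{j\bar i}$ plus a real-part cross term. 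The term $\sum_{i,j,k}h^{i\bar i}\chi'^{j\bar j}\partial_k\chi'_{i\bar j}\partial_{\bar k}\chi'_{j\bar i}$ appearing in $\widetilde{\Delta}S$ dominates the single-$k$ slice $\sum_{i,j}h^{i\bar i}\chi'^{j\bar j}\partial_j\chi'_{i\bar j}\partial_{\bar j}\chi'_{j\bar i}$ (taking $k=j$ in the sum and discarding the remaining nonnegative terms), so subtracting $|\widetilde{\nabla}S|^2_h/S$ leaves us with the \emph{other} nonnegative term $\sum h^{i\bar i}\chi'^{j\bar j}\partial_k\chi'_{j\bar i}\partial_{\bar k}\chi'_{i\bar j}\ge 0$ intact, plus the cross term, plus $E_2$. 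Thus
\begin{equation*}
\widetilde{\Delta}S-\frac{|\widetilde{\nabla}S|^2_h}{S}\ \ge\ -2\,{\rm Re}\Big(\sum_{1\le i,j\le n}h^{i\bar i}\chi'^{j\bar j}\partial_i\chi'_{j\bar j}\big(\partial_{\bar i}\chi_{j\bar j}-\partial_{\bar j}\chi_{j\bar i}\big)\Big)+E_2.
\end{equation*}

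The remaining obstacle — and the one genuinely delicate point — is absorbing this cross term. I would estimate it by Cauchy--Schwarz in the form $2\,{\rm Re}(ab)\ge -\tfrac{1}{\epsilon}|a|^2-\epsilon|b|^2$ for a well-chosen $\epsilon$: write $a=\sqrt{h^{i\bar i}}\sqrt{\chi'^{j\bar j}}\,\partial_i\chi'_{j\bar j}$ and $b=\sqrt{h^{i\bar i}}\sqrt{\chi'^{j\bar j}}\,(\partial_{\bar i}\chi_{j\bar j}-\partial_{\bar j}\chi_{j\bar i})$. The $|b|^2$ piece is then $\sum h^{i\bar i}\chi'^{j\bar j}|\partial\chi|^2$, which since $h^{i\bar i}\le e^{2F}$ and $\chi'^{j\bar j}\le e^F$ is a type I term, hence $\gtrsim -1$. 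The $|a|^2$ piece is $\tfrac1\epsilon\sum h^{i\bar i}\chi'^{j\bar j}|\partial_i\chi'_{j\bar j}|^2$; here I re-expand $\partial_i\chi'_{j\bar j}=\partial_i\chi_{j\bar j}-\partial_j\chi_{i\bar j}+\partial_j\chi'_{i\bar j}$ as in (\ref{2.19}) and bound $|\partial_i\chi'_{j\bar j}|^2\le 2|\partial_j\chi'_{i\bar j}|^2+C$, so that $\tfrac1\epsilon|a|^2$ contributes $\tfrac{2}{\epsilon}\sum h^{i\bar i}\chi'^{j\bar j}\partial_j\chi'_{i\bar j}\partial_{\bar j}\chi'_{i\bar j}$ plus type I. But this last sum is again dominated by the surviving good term $\sum_{i,j,k}h^{i\bar i}\chi'^{j\bar j}\partial_k\chi'_{j\bar i}\partial_{\bar k}\chi'_{i\bar j}$ (slice $k=j$, relabel $i\leftrightarrow j$) — provided we took $\epsilon$ large enough, say $\epsilon=4$, so that $2/\epsilon<1$ and there is room to spare. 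Hence everything that is not manifestly nonnegative is type I, and we conclude $\widetilde{\Delta}S-|\widetilde{\nabla}S|^2_h/S\ge E_1$, so $\widetilde{\Delta}\log S=(1/S)(\cdots)\ge (1/S)E_1\gtrsim -1$ since $1/S\le e^F/n$. The one bookkeeping subtlety to watch is that the good terms come with various index permutations, so one must be careful that the slices extracted to dominate the bad terms are genuinely disjoint pieces of the nonnegative sums; I expect this to work but it is where a sign or index slip would most easily creep in.
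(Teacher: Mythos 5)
Your overall architecture (Bochner identity (\ref{2.10}), formula (\ref{2.17}) for $\widetilde{\Delta}\,{\rm tr}_{\omega}\chi'$, gradient bound (\ref{2.20}), cancel one good term against the $k=j$ slice, absorb the cross term into what survives) is the right one, but the absorption step --- the very point you flagged as delicate --- does not work as you set it up. With your \emph{equal-weight} Cauchy--Schwarz, $a=\sqrt{h^{i\bar i}\chi'^{j\bar j}}\,\partial_i\chi'_{j\bar j}$, the square term you must absorb, after re-expanding via (\ref{2.19}), is $\tfrac{2}{\epsilon}\sum_{i,j}h^{i\bar i}\chi'^{j\bar j}|\partial_j\chi'_{i\bar j}|^2$. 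But this is exactly (a multiple of) the $k=j$ slice of the \emph{second} good term $\sum_{i,j,k}h^{i\bar i}\chi'^{j\bar j}\partial_k\chi'_{i\bar j}\partial_{\bar k}\chi'_{j\bar i}$ --- the slice you have already spent, in full and with no slack, cancelling the first term of (\ref{2.20}). It is \emph{not} dominated by the surviving term $T_1=\sum_{i,j,k}h^{i\bar i}\chi'^{j\bar j}\partial_k\chi'_{j\bar i}\partial_{\bar k}\chi'_{i\bar j}$: the only summand of $T_1$ containing $|\partial_j\chi'_{i\bar j}|^2$ is the one with indices $(i,j,k)\mapsto(j,i,j)$, whose weight is $h^{j\bar j}\chi'^{i\bar i}=1/(\lambda_j'^2\lambda_i')$ rather than the needed $h^{i\bar i}\chi'^{j\bar j}=1/(\lambda_i'^2\lambda_j')$. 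The ratio of these weights is $\lambda'_j/\lambda'_i$, for which only a lower bound on the eigenvalues is known a priori (Remark \ref{r2.1}); an upper bound on this ratio is essentially what the lemma is being used to prove. Hence no fixed $\epsilon$ (large or small) makes the absorption go through, and the ``slice $k=j$, relabel $i\leftrightarrow j$'' step is precisely where the index slip occurs. (A smaller internal inconsistency: your displayed inequality for $\widetilde{\Delta}S-|\widetilde{\nabla}S|^2_h/S$ already discards $T_1$, which you then invoke later as ``the surviving good term''.)

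The paper's proof of Lemma \ref{l2.4} avoids this by a \emph{lopsided} Cauchy--Schwarz, see (\ref{2.21}): the cross term is written as $\sum_{i,j}\sqrt{h^{j\bar j}}\sqrt{\chi'^{j\bar j}}\,\partial_i\chi'_{j\bar j}\cdot\sqrt{\chi'_{j\bar j}}\,h^{i\bar i}\bigl(\partial_{\bar i}\chi_{j\bar j}-\partial_{\bar j}\chi_{j\bar i}\bigr)$, so that the good square is $\sum_{i,j}h^{j\bar j}\chi'^{j\bar j}|\partial_i\chi'_{j\bar j}|^2$, whose \emph{both} weights carry the index $j$ of the differentiated entry. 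This is manifestly bounded by $\sum_{i,j,k}h^{k\bar k}\chi'^{j\bar j}|\partial_i\chi'_{j\bar k}|^2=T_1$ (take the $k=j$ slice), with constant $1$ and with no recourse to (\ref{2.19}); so $T_1$ absorbs the cross term and the $k=j$ slice of the other good term remains free to cancel the leading term of (\ref{2.20}), exactly as in (\ref{2.22}). The price is that the bad square becomes $\sum_{i,j}\chi'_{j\bar j}(h^{i\bar i})^2|\partial_{\bar i}\chi_{j\bar j}-\partial_{\bar j}\chi_{j\bar i}|^2$, which is only of type II --- but that is harmless, since one divides by ${\rm tr}_{\omega}\chi'$ at the end; insisting on a type I error, as your splitting does, is unnecessary and is what forces the unabsorbable term. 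Replacing your equal splitting by this unequal one repairs the proof and reduces it to the paper's argument.
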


\begin{proof} Calculate, since $h^{j\bar{j}}=\chi'^{j\bar{j}}\chi'^{j\bar{j}}$,
\begin{eqnarray}
&&\left|2\cdot{\rm Re}\left[\sum_{1\leq i,j\leq n}h^{i\bar{i}}
\chi'^{j\bar{j}}\partial_{i}\chi'_{j\bar{j}}
\left(\partial_{i}\chi_{j\bar{j}}-\partial_{\bar{j}}\chi_{j\bar{i}}
\right)\right]\right|\nonumber\\
&=&\left|2\cdot{\rm Re}\left[\sum_{1\leq i,j\leq n}\sqrt{h^{j\bar{j}}}
\sqrt{\chi'^{j\bar{j}}}\partial_{i}\chi'_{j\bar{j}}
\cdot\sqrt{\chi'_{j\bar{j}}}h^{i\bar{i}}\left(\partial_{\bar{i}}
\chi_{j\bar{j}}-\partial_{\bar{j}}\chi_{j\bar{i}}\right)
\right]\right|\nonumber\\
&\leq&\sum_{1\leq i,j\leq n}h^{j\bar{j}}\chi'^{j\bar{j}}
\partial_{i}\chi'_{j\bar{j}}\partial_{\bar{i}}
\chi'_{j\bar{j}}+\sum_{1\leq i,j\leq n}\chi'_{j\bar{j}}(h^{i\bar{i}})^{2}
|\partial_{\bar{i}}\chi_{j\bar{j}}
-\partial_{\bar{j}}\chi_{j\bar{i}}|^{2}\label{2.21}\\
&\leq&\sum_{1\leq i,j,k\leq n}h^{k\bar{k}}\chi'^{j\bar{j}}
\partial_{i}\chi'_{j\bar{k}}\partial_{\bar{i}}\chi'_{k\bar{j}}
+E_{2}\nonumber\\
&=&\sum_{1\leq i,j,k\leq n}h^{i\bar{i}}
\chi'^{j\bar{j}}\partial_{k}\chi'_{j\bar{i}}
\partial_{\bar{k}}\chi'_{i\bar{j}}+E_{2},\nonumber
\end{eqnarray}
where $E_{2}$ is a term of type II:
\begin{equation*}
E_{2}=\sum_{1\leq i,j\leq n}\chi'_{j\bar{j}}(h^{i\bar{i}})^{2}
|\partial_{\bar{i}}\chi_{j\bar{j}}-\partial_{\bar{j}}
\chi_{j\bar{i}}|^{2}.
\end{equation*}
From (\ref{2.10}), (\ref{2.17}), (\ref{2.20}), and (\ref{2.21}), we have
\begin{eqnarray}
\widetilde{\Delta}{\rm log}({\rm tr}_{\omega}\chi')&\geq&\frac{1}{{\rm tr}_{\omega}\chi'}
\left[\sum_{1\leq i,j,k\leq n}h^{i\bar{i}}\chi'^{j\bar{j}}
\partial_{k}\chi'_{i\bar{j}}\partial_{\bar{k}}\chi'_{j\bar{i}}
+E_{2}\right.\nonumber\\
&& \ \left.-\sum_{1\leq i,j\leq n}h^{i\bar{i}}\chi'^{j\bar{j}}
\partial_{j}\chi'_{i\bar{j}}\partial_{\bar{j}}\chi'_{j\bar{i}}\right]\nonumber\\
&=&\frac{1}{{\rm tr}_{\omega}\chi'}\left(\sum_{1\leq i\leq n}\sum_{1\leq j\neq k\leq n}
h^{i\bar{i}}\chi'^{j\bar{j}}\partial_{k}\chi'_{i\bar{j}}
\partial_{\bar{k}}\chi'_{j\bar{i}}+E_{2}\right)\label{2.22}\\
&=&\frac{1}{{\rm tr}_{\omega}\chi'}\left(\sum_{1\leq i\leq n}\sum_{1\leq j\neq k\leq n}
h^{i\bar{i}}\chi'^{j\bar{j}}\left|\partial_{k}\chi'_{i\bar{j}}\right|^{2}
+E_{2}\right)\nonumber\\
&\geq&\frac{E_{2}}{{\rm tr}_{\omega}\chi'}.\nonumber
\end{eqnarray}
By the definition of type II terms, there exists a positive universal constant $C$ satisfying $|E_{2}|_{\omega}\leq C\cdot{\rm tr}_{\omega}\chi'$. Therefore
\begin{equation*}
\widetilde{\Delta}{\rm log}({\rm tr}_{\omega}\chi')\gtrsim-1.
\end{equation*}
Thus we complete the proof of the lemma.
\end{proof}

\begin{theorem}\label{t2.5} Let $(X,\omega)$ be a compact K\"ahler manifold of complex dimension $n$, and $\chi$ a Hermitian metric. Let $\varphi$ be a
smooth solution of Donaldson's equation
\begin{equation*}
\omega\wedge\chi^{n-1}_{\varphi}=e^{F}\chi^{n}_{\varphi}
\end{equation*}
where $F$ is a smooth function on $X$. Assume that
\begin{equation*}
\chi-\frac{n-1}{ne^{F}}\omega>0.
\end{equation*}
Then there are uniform constants $A>0$ and $C>0$, depending only on $X, \omega, \chi$, and $F$, such that
\begin{equation*}
{\rm tr}_{\omega}\chi_{\varphi}\leq C\cdot e^{A(\varphi-
\inf_{X}\varphi)}.
\end{equation*}
\end{theorem}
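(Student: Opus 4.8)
The plan is to run the standard Yau-type maximum principle argument on the quantity $Q:=\log({\rm tr}_\omega\chi')-A\varphi$, using the estimate for $\widetilde\Delta\log({\rm tr}_\omega\chi')$ already established in Lemma~\ref{l2.4}. Since $X$ is compact, $Q$ attains its maximum at some point $p\in X$; I will work in the coordinates of Lemma~\ref{l2.2} centred at $p$, in which $g_{i\bar j}=\delta_{ij}$, $\partial_k g_{i\bar j}=0$ at $p$ (here $\omega$ is K\"ahler, so in fact this holds to second order in the relevant sense), and $\chi'_{i\bar j}=\lambda'_i\delta_{ij}$. At $p$ we have $\widetilde\Delta Q\le 0$, i.e.
\begin{equation*}
\widetilde\Delta\log({\rm tr}_\omega\chi')\le A\,\widetilde\Delta\varphi.
\end{equation*}
By Lemma~\ref{l2.4} the left side is $\gtrsim-1$, so I need a matching \emph{upper} bound for $A\widetilde\Delta\varphi$ that, for a suitable large choice of $A$, forces ${\rm tr}_\omega\chi'(p)$ to be bounded.

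The key computation is to evaluate $\widetilde\Delta\varphi$. Writing $\varphi_{i\bar j}=\chi'_{i\bar j}-\chi_{i\bar j}$ and using $h^{i\bar i}=(\chi'^{i\bar i})^2=1/\lambda_i'^2$, one gets at $p$
\begin{equation*}
\widetilde\Delta\varphi=\sum_i h^{i\bar i}\varphi_{i\bar i}
=\sum_i\frac{\lambda_i'-\chi_{i\bar i}}{\lambda_i'^2}
=\sum_i\frac{1}{\lambda_i'}-\sum_i\frac{\chi_{i\bar i}}{\lambda_i'^2}
={\rm tr}_{\chi'}\omega-\sum_i\frac{\chi_{i\bar i}}{\lambda_i'^2}
= ne^F-\sum_i\frac{\chi_{i\bar i}}{\lambda_i'^2},
\end{equation*}
using Donaldson's equation in the form $\sum_i 1/\lambda_i'=ne^F$ (Remark~\ref{r2.1}). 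The crucial point is the sign and size of $\sum_i\chi_{i\bar i}/\lambda_i'^2$: the hypothesis $\chi-\frac{n-1}{ne^F}\omega>0$ says $\chi_{i\bar i}>\frac{n-1}{ne^F}$ at $p$ in these coordinates, so $\sum_i\chi_{i\bar i}/\lambda_i'^2 \ge \varepsilon\sum_i 1/\lambda_i'^2$ for a uniform $\varepsilon>0$ (more precisely one wants $\chi_{i\bar i}\ge (1+\delta)\frac{n-1}{ne^F}$ with a little room, which the strict inequality and compactness provide). Then $\sum_i 1/\lambda_i'^2$ must be compared with $({\rm tr}_\omega\chi')={\rm tr}_\omega\chi+\Delta_\omega\varphi$: by Cauchy--Schwarz and $\sum 1/\lambda_i'=ne^F$ one has $\sum_i 1/\lambda_i'^2\ge \frac{1}{n}(ne^F)^2$ from below, but for the argument I instead need that $\widetilde\Delta\varphi$ is bounded \emph{above} by something that, combined with the concavity gain, defeats the $A$. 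In fact the cleaner route: $\widetilde\Delta\varphi = ne^F-\sum_i\chi_{i\bar i}/\lambda_i'^2 \le ne^F$ trivially, but that is not enough; the real gain is that one also has, from the first-order terms at a maximum ($\partial_i Q=0$, so $\partial_i{\rm tr}_\omega\chi' = A\,{\rm tr}_\omega\chi'\,\partial_i\varphi$), an extra coercive term. So I would combine $\widetilde\Delta Q\le0$ with the \emph{full} expansion of $\widetilde\Delta\log({\rm tr}_\omega\chi')$ from \eqref{2.10}, not merely Lemma~\ref{l2.4}, keeping the good negative gradient term $-|\widetilde\nabla{\rm tr}_\omega\chi'|_h^2/({\rm tr}_\omega\chi')^2$ and the positive third-derivative terms, and use them to absorb the cross terms produced by $A\partial_i\varphi$.

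Concretely, the steps in order: (1) set up $Q$, pick $p=\arg\max Q$, fix Guan--Li coordinates, record $\partial_iQ(p)=0$ and $\widetilde\Delta Q(p)\le 0$; (2) substitute the identity $\partial_i{\rm tr}_\omega\chi'=A\,{\rm tr}_\omega\chi'\cdot\partial_i\varphi$ into the inequality from \eqref{2.10}, \eqref{2.17}, \eqref{2.20}, \eqref{2.21}, so that the terms involving $\partial_i\chi'_{j\bar j}$ get replaced by terms linear in $A\partial_i\varphi$ plus the controlled $\chi$-derivative terms; (3) compute $\widetilde\Delta\varphi$ as above and plug in the hypothesis \eqref{1.8} to produce a term $-cA\sum_i\chi'^{i\bar i}\chi'^{i\bar i}$ of definite sign (this is where $A$ must be chosen large, of order depending on $\varepsilon$ and $\sup F$); (4) balance: choosing $A$ large, the negative terms dominate all the $E_1,E_2$ error terms (which are of type I and II, hence $\lesssim{\rm tr}_\omega\chi'$), forcing $0\le \widetilde\Delta Q(p)\le C-\,(\text{positive multiple of }{\rm tr}_\omega\chi'(p))$, whence ${\rm tr}_\omega\chi'(p)\le C$; (5) conclude for all $x\in X$: $\log({\rm tr}_\omega\chi'(x))\le Q(x)+A\varphi(x)\le Q(p)+A\varphi(x)=\log({\rm tr}_\omega\chi'(p))-A\varphi(p)+A\varphi(x)\le \log C+A(\varphi(x)-\inf_X\varphi)$, i.e.\ ${\rm tr}_\omega\chi_\varphi\le C e^{A(\varphi-\inf_X\varphi)}$.

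The main obstacle is step (4), the bookkeeping that shows the positive third-order term $\sum_{i}\sum_{j\ne k}h^{i\bar i}\chi'^{j\bar j}|\partial_k\chi'_{i\bar j}|^2$ together with the structural negative term coming from \eqref{1.8} really dominates the cross terms $A\cdot{\rm Re}(\sum h^{i\bar i}\chi'^{j\bar j}\partial_i\varphi(\cdots))$ after completing the square; one must verify that the coefficient $\frac{n-1}{ne^F}$ in the hypothesis is \emph{exactly} the threshold that makes a Cauchy--Schwarz/AM--GM estimate close, with the strictness in \eqref{1.8} giving the margin needed to absorb errors and to beat the constant $A$. Handling the non-K\"ahler torsion terms $\partial_i\chi_{j\bar k}-\partial_j\chi_{i\bar k}$ (which do not vanish since $\chi$ is only Hermitian) is routine here because they are $E_1$/$E_2$-type and were already packaged as such in \eqref{2.20}--\eqref{2.21}; the genuine work is the algebraic inequality tuning $A$ against $\varepsilon$.
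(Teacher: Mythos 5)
Your skeleton (maximum principle applied to $Q=\log(\mathrm{tr}_\omega\chi')-A\varphi$, Lemma \ref{l2.4}, the computation $\widetilde\Delta\varphi=\mathrm{tr}_{\chi'}\omega-\mathrm{tr}_h\chi=ne^F-\sum_i\chi_{i\bar i}/\lambda_i'^2$, and the hypothesis \eqref{1.8} with a margin $\epsilon$) matches the paper, but the mechanism you describe in step (4) is wrong, and it is precisely there that the actual work of the proof lies. At the maximum point the inequality $\widetilde\Delta Q\le 0$ reads, in the paper's notation,
\begin{equation*}
0\ \ge\ \bigl(Bne^{F}-C\bigr)-(A+B)\sum_{i}\frac{1}{\lambda_i'}+A\,\frac{(n-1)(1+\epsilon)}{ne^{F}}\sum_{i}\frac{1}{\lambda_i'^{2}},
\end{equation*}
where the free parameter $B$ enters by adding $B\bigl(ne^F-\sum_i\chi'^{i\bar i}\bigr)=0$ using $\mathrm{tr}_{\chi'}\omega=ne^F$. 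Every $\lambda'$-dependent term here involves only the reciprocals $1/\lambda_i'$, whose sum is the fixed number $ne^F$; there is no term of the form $-c\,\mathrm{tr}_\omega\chi'$ anywhere, so no amount of ``choosing $A$ large so the negative terms dominate the $E_1,E_2$ errors'' can produce $0\le C-c\,\mathrm{tr}_\omega\chi'(p)$. Worse, taking $A$ large (with $B$ fixed) actively destroys the argument: the inequality above with $\alpha:=\frac{A+B}{Bne^F-C}$, $\beta:=\frac{A(n-1)(1+\epsilon)/(ne^F)}{Bne^F-C}$ admits solutions with one $\lambda_i'$ arbitrarily large as soon as $\alpha^2/\beta\ge 4/(n-1)$, and $\alpha^2/\beta\sim A\cdot\mathrm{const}\to\infty$ as $A\to\infty$. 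The bound on each $\lambda_i'$ is instead obtained from the paper's Lemma \ref{l2.6}: one completes the square to get $1+\sum_i\bigl(\tfrac{\alpha}{2\sqrt\beta}-\tfrac{\sqrt\beta}{\lambda_i'}\bigr)^2\le\tfrac{n\alpha^2}{4\beta}$, and only when $\alpha^2/\beta$ lies in the narrow window $[4/n,\,4/(n-1))$ does this force every $1/\lambda_i'$ to stay away from $0$. Arranging $\alpha^2/\beta$ to land in that window is exactly the content of the explicit choices (\ref{2.25})--(\ref{2.31}), where $A$ and $B$ are tied together by (\ref{2.29})--(\ref{2.30}) and the strictness of \eqref{1.8} (through $\epsilon>\frac{1-\eta}{n-1}$, possible since $0<\epsilon<\frac{1}{n-1}$) is what makes the window nonempty. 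You acknowledge at the end that the coefficient $\frac{n-1}{ne^F}$ is a threshold and that ``the genuine work is the algebraic inequality tuning $A$ against $\varepsilon$,'' but that tuning is the proof, and your stated plan (large $A$, domination, a $-c\,\mathrm{tr}_\omega\chi'$ term) would not carry it out.

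A secondary point: your step (2), invoking the first-order condition $\partial_iQ(p)=0$ to substitute $\partial_i\mathrm{tr}_\omega\chi'=A\,\mathrm{tr}_\omega\chi'\,\partial_i\varphi$ into the expansion, is unnecessary and does not supply the coercivity you hope for. The gradient term $|\widetilde\nabla\mathrm{tr}_\omega\chi'|_h^2/(\mathrm{tr}_\omega\chi')^2$ is already absorbed pointwise, at an arbitrary point, by the Cauchy--Schwarz manipulations (\ref{2.18})--(\ref{2.21}) inside Lemma \ref{l2.4}; the paper's maximum-principle step uses only $\widetilde\Delta Q(p)\le 0$. The ``extra coercive term'' you want from $\partial_iQ=0$ does not exist in this problem; all the coercivity comes from the structural condition \eqref{1.8} fed into Lemma \ref{l2.6}.
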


\begin{proof} Use the local coordinates in Lemma \ref{l2.2}. The proof is similar to that in \cite{W1, W2}. By the definition, one has
\begin{equation*}
\widetilde{\Delta}\varphi=h^{k\bar{k}}\varphi_{k\bar{k}}
=(\chi'^{k\bar{k}})^{2}(\chi'_{k\bar{k}}
-\chi_{k\bar{k}})=\sum_{1\leq k\leq n}\chi'^{k\bar{k}}
-{\rm tr}_{h}\chi={\rm tr}_{\chi'}\omega-{\rm tr}_{h}\chi.
\end{equation*}
Lemma \ref{l2.4} and (\ref{2.7}) imply that
\begin{eqnarray*}
\widetilde{\Delta}Q&=&\widetilde{\Delta}\left[{\rm log}
\left({\rm tr}_{\omega}\chi'\right)-A\varphi\right] \ \ \geq \ \ -C-
A\left({\rm tr}_{\chi'}\omega-{\rm tr}_{h}\chi\right)\\
&\geq&-C-A\sum_{1\leq i\leq n}\chi'^{i\bar{i}}+A\sum_{1\leq i\leq n}
\chi'^{i\bar{i}}\chi'^{i\bar{i}}\chi_{i\bar{i}}.
\end{eqnarray*}
Since $\varphi$ is a solution of Donaldson's equation, it follows that
${\rm tr}_{\chi'}\omega=ne^{F}$ by (\ref{2.7}) and hence, for any given positive uniform constants $A$ and $B$ (we will chose those constants later),
\begin{equation*}
\widetilde{\Delta}Q\geq\left(Bne^{F}-C\right)
-(A+B)\sum_{1\leq i\leq n}\chi'^{i\bar{i}}+A\sum_{1\leq i\leq n}\chi'^{i\bar{i}}
\chi'^{i\bar{i}}\chi_{i\bar{i}}.
\end{equation*}
By the assumption we have $\chi\geq\frac{n-1}{ne^{F}}(1+\epsilon)\omega$ for some suitable
number $\epsilon$ such that $0<\epsilon<\frac{1}{n-1}$. Let $p\in X$ be a point where $Q$ achieves its maximum; so $\widetilde{\Delta}Q\leq0$. At this point, we conclude that
\begin{eqnarray*}
0&\geq&\left(Bne^{F}-C\right)-(A+B)\sum_{1\leq i\leq n}\chi'^{i\bar{i}}
+A\sum_{1\leq i\leq n}\chi'^{i\bar{i}}\chi'^{i\bar{i}}\chi_{i\bar{i}}\\
&\geq&\left(Bne^{F}-C\right)-(A+B)\sum_{1\leq i\leq n}\chi'^{i\bar{i}}
+A\frac{n-1}{ne^{F}}(1+\epsilon)\sum_{1\leq i\leq n}\chi'^{i\bar{i}}\chi'^{i\bar{i}}.
\end{eqnarray*}
We denote by $\lambda'_{i}$ the eigenvalues of $\chi'$ at point $p$ such that
$\lambda'_{1}\leq\cdots\leq\lambda'_{n}$. Hence
\begin{equation*}
0\geq\left(Bne^{F}-C\right)-(A+B)\sum_{1\leq i\leq n}
\frac{1}{\lambda'_{i}}
+A\frac{n-1}{ne^{F}}(1+\epsilon)\sum_{1\leq i\leq n}\frac{1}{\lambda'^{2}_{i}}.
\end{equation*}
In order to obtain the upper bound for $\lambda'_{i}$ we need the following

\begin{lemma}\label{l2.6} Let $\lambda_{1},\cdots,\lambda_{n}$ be a
sequence of positive numbers. Suppose
\begin{equation*}
0\geq1-\alpha\sum_{1\leq i\leq n}\frac{1}{\lambda_{i}}+\beta\sum_{1\leq i\leq n}
\frac{1}{\lambda^{2}_{i}}\label{2.22}
\end{equation*}
for some $\alpha, \beta>0$ and $n\geq2$. If
\begin{equation}
\frac{4}{n}\leq\frac{\alpha^{2}}{\beta}<\frac{4}{n-1}\label{2.23}
\end{equation}
holds, then
\begin{equation}
\lambda_{i}\leq\frac{2\beta}{\alpha-\sqrt{n\alpha^{2}-4\beta}}
\label{2.24}
\end{equation}
for each $i$.
\end{lemma}

\begin{proof} Note that $\alpha-\sqrt{n\alpha^{2}-4\beta}>0$ by (\ref{2.23}). Since
\begin{equation*}
1+\sum_{1\leq i\leq n}\left(\frac{\alpha}{2\sqrt{\beta}}-\frac{\sqrt{\beta}}{\lambda_{i}}
\right)^{2}\leq\frac{n\alpha^{2}}{4\beta}
\end{equation*}
it implies that
\begin{equation*}
\sum_{1\leq i\leq n}\left(\frac{\alpha}{2\sqrt{\beta}}-\frac{\sqrt{\beta}}{\lambda_{i}}
\right)^{2}\leq\frac{n\alpha^{2}-4\beta}{4\beta}.
\end{equation*}
The right hand side of the above inequality is nonnegative by (\ref{2.23}). Consequently,
\begin{equation*}
\frac{\alpha}{2\sqrt{\beta}}-\frac{\sqrt{\beta}}{\lambda_{i}}
\leq\sqrt{\frac{n\alpha^{2}-4\beta}{4\beta}}
\end{equation*}
and then
\begin{equation*}
\frac{\alpha-\sqrt{n\alpha^{2}-4\beta}}{2\sqrt{\beta}}\leq
\frac{\sqrt{\beta}}{\lambda_{i}}.
\end{equation*}
Hence we obtain (\ref{2.24}).
\end{proof}

To apply Lemma \ref{l2.6}, we assume
\begin{equation}
Bne^{F}>C,\label{2.25}
\end{equation}
and set
\begin{equation}
\alpha\doteqdot\frac{A+B}{Bne^{F}-C}, \ \ \
\beta\doteqdot\frac{A\frac{n-1}{ne^{F}}(1+\epsilon)}{Bne^{F}-C}.
\label{2.26}
\end{equation}
In the following we will find the explicit formulas for $A$ and $B$ in terms
of $C$ such that the assumption (\ref{2.25}) and the condition
(\ref{2.23}) are both satisfied.

We choose a real number $\eta$ satisfying
\begin{equation}
0\leq\eta<1.\label{2.27}
\end{equation}
Set
\begin{equation}
\frac{\alpha^{2}}{\beta}=\frac{4}{n-\eta},\label{2.28}
\end{equation}
where $\alpha$ and $\beta$ are given in (\ref{2.26}). If (\ref{2.28}) was valid, then the condition (\ref{2.23}) is true. Equations (\ref{2.26}) and (\ref{2.28}) imply
\begin{equation*}
(A+B)^{2}=\frac{4}{n-\eta}(1+\epsilon)\left(Bne^{F}-C\right)
\frac{n-1}{ne^{F}}A
\end{equation*}
so that
\begin{equation*}
A^{2}+B^{2}+2\left[1-\frac{2(1+\epsilon)(n-1)}{n-\eta}\right]
AB+\frac{4(1+\epsilon)(n-1)C}{(n-\eta)ne^{F}}A=0.
\end{equation*}
The above relation can be rewritten as
\begin{eqnarray*}
\left[A+\left(1-\frac{2(1+\epsilon)(n-1)}{n-\eta}\right)
B\right]^{2}&=&\left[\left(1-\frac{2(1+\epsilon)(n-1)}{n-\eta}\right)^{2}
-1\right]B^{2}\\
&&- \ \frac{4(1+\epsilon)(n-1)C}{(n-\eta)ne^{F}}A.
\end{eqnarray*}
Taking
\begin{equation}
A=\left(-1+\frac{2(1+\epsilon)(n-1)}{n-\eta}\right)B\label{2.29}
\end{equation}
we have $A>B$ and
\begin{equation}
B=\frac{\frac{4(1+\epsilon)(n-1)C}{(n-\eta)ne^{F}}
\left(-1+\frac{2(1+\epsilon)(n-1)}{n-\eta}
\right)}{\left(1-\frac{2(1+\epsilon)(n-1)}{n-\eta}\right)^{2}-1}
=\frac{C}{ne^{F}}\cdot\frac{-(n-\eta)+2(1+\epsilon)(n-1)
}{-(n-\eta)
+(1+\epsilon)(n-1)},\label{2.30}
\end{equation}
assuming
\begin{equation}
(1+\epsilon)>\frac{n-\eta}{n-1}.\label{2.31}
\end{equation}
From (\ref{2.30}) and (\ref{2.31}) we see that
\begin{equation*}
\frac{Bne^{F}}{C}=\frac{-(n-\eta)+2(1+\epsilon)(n-1)
}{-(n-\eta)
+(1+\epsilon)(n-1)}>1.
\end{equation*}
From the assumption $0<\epsilon<\frac{1}{n-1}$ we have $0<n-(n-1)(1+\epsilon)<1$ and then such a $\eta$ always exists. Hence Lemma \ref{l2.6} yields
\begin{equation*}
\lambda'_{i}\leq\frac{2\beta}{\alpha-\sqrt{n\alpha^{2}-4\beta}}
\end{equation*}
where $\alpha$ and $\beta$ are determined by (\ref{2.26}), (\ref{2.29}), and (\ref{2.30}). Since ${\rm tr}_{\omega}\chi'=\sum^{n}_{i=1}\lambda'_{i}$, it follows that, at $p\in X$, ${\rm tr}_{\omega}\chi'\leq C$ for some uniform constant $C$ and, for any point $q\in X$,
\begin{equation*}
Q(q)\leq Q(p)={\rm log}({\rm tr}_{\omega}\chi')(p)-A\varphi(p)
\leq C-A\cdot\inf_{X}\varphi.
\end{equation*}
Equivalently, ${\rm log}({\rm tr}_{\omega}\chi')\leq C+A(\varphi
-\inf_{X}\varphi)$.
\end{proof}

%%%%%%%%%%%%%%%%%%%%%%%%%%%%%%%%%%%%%%%%%%%%%%%%%%%%%%%%%%%%%%%%%%%%%%%%%%%%%%
\subsection{The estimate for $\widetilde{\Delta}{\rm log}({\rm tr}_{\omega}
\chi')$, continued: general case}\label{subsection8.2.4}
%%%%%%%%%%%%%%%%%%%%%%%%%%%%%%%%%%%%%%%%%%%%%%%%%%%%%%%%%%%%%%%%%%%%%%%%%%%%%%

Now we consider the general case that both $\omega$ and $\chi$ may not be
K\"ahler. Using Lemma \ref{l2.2} we have
\begin{eqnarray}
\widetilde{\Delta}{\rm tr}_{\omega}\chi'&=&\sum_{1\leq i,j,k\leq n}h^{i\bar{i}}
\chi'^{j\bar{j}}\partial_{k}\chi'_{j\bar{i}}
\partial_{\bar{k}}\chi'_{i\bar{j}}+\sum_{1\leq i,j,k\leq n}h^{i\bar{i}}
\chi'^{j\bar{j}}\partial_{k}\chi'_{i\bar{j}}\partial_{\bar{k}}
\chi'_{j\bar{i}}+E_{2}\nonumber\\
&&- \ 2\cdot{\rm Re}\left(\sum_{1\leq i,j,k\leq n}\chi'^{i\bar{i}}
\chi'^{j\bar{j}}\partial_{k}g_{j\bar{i}}\partial_{\bar{k}}
\chi'_{i\bar{j}}\right)\label{2.32}\\
&&- \ 2\cdot{\rm Re}\left(\sum_{1\leq i,j,k\leq n}h^{i\bar{i}}
\partial_{\bar{i}}g_{j\bar{k}}\partial_{i}\chi'_{k\bar{j}}\right).\nonumber
\end{eqnarray}
As in \cite{TW1} we deal with the last two terms by using the local
coordinates in Lemma \ref{l2.2}. Starting from the last term, we calculate
\begin{eqnarray}
\sum_{1\leq i,j,k\leq n}h^{i\bar{i}}\partial_{i}\chi'_{k\bar{j}}
\partial_{\bar{i}}g_{j\bar{k}}&=&\sum_{1\leq i,j,k\leq n}h^{i\bar{i}}
\partial_{\bar{i}}g_{j\bar{k}}\partial_{i}\left(\chi_{k\bar{j}}
+\varphi_{k\bar{j}}\right)\nonumber\\
&=&\sum_{1\leq i,j,k\leq n}h^{i\bar{i}}\partial_{\bar{i}}
g_{j\bar{k}}\left(\partial_{i}\chi_{k\bar{j}}+\partial_{k}
\varphi_{i\bar{j}}\right)\nonumber\\
&=&\sum_{1\leq i,j,k\leq n}h^{i\bar{i}}\partial_{\bar{i}}
g_{j\bar{k}}\left(\partial_{i}\chi_{k\bar{j}}+\partial_{k}\chi'_{i\bar{j}}
-\partial_{k}\chi_{i\bar{j}}\right)\label{2.33}\\
&=&\sum^{n}_{i=1}\sum_{1\leq j\neq k\leq n}h^{i\bar{i}}
\partial_{\bar{i}}g_{j\bar{k}}\partial_{k}\chi'_{i\bar{j}}+E_{1},\nonumber
\end{eqnarray}
where $E_{1}$ is a term of type I and is given by
\begin{equation}
E_{1}=\sum_{1\leq i,j,k\leq n}h^{i\bar{i}}\partial_{\bar{i}}
g_{j\bar{k}}\left(\partial_{i}\chi_{k\bar{j}}-\partial_{k}\chi_{i\bar{j}}\right).
\label{2.34}
\end{equation}
Taking the real part of (\ref{2.33}) gives
\begin{eqnarray}
&& \ \ \ \ \ \ \ \ \ \ \left|2\cdot{\rm Re}\left(\sum_{1\leq i,j,k\leq n}h^{i\bar{i}}\partial_{i}
\chi'_{k\bar{j}}\partial_{\bar{i}}g_{j\bar{k}}\right)\right|
\label{2.35}\\
&=&\left|2\cdot{\rm Re}\left(\sum_{1\leq i\leq n}\sum_{1\leq j\neq k\leq n}
\sqrt{h^{i\bar{i}}}\sqrt{\chi'^{j\bar{j}}}\partial_{k}\chi'_{i\bar{j}}
\cdot\sqrt{h^{i\bar{i}}}\sqrt{\chi'_{j\bar{j}}}\partial_{\bar{i}}
g_{j\bar{k}}\right)\right|+E_{1}\nonumber\\
&\leq&\sum_{1\leq i\leq n}\sum_{1\leq j\neq k\leq n}h^{i\bar{i}}
\chi'^{j\bar{j}}\partial_{k}\chi'_{i\bar{j}}\partial_{\bar{k}}
\chi'_{j\bar{i}}+\sum^{n}_{i=1}\sum_{1\leq j\neq k\leq n}h^{i\bar{i}}
\chi'_{j\bar{j}}\partial_{\bar{i}}g_{j\bar{k}}\partial_{i}
g_{k\bar{j}}+E_{1}\nonumber\\
&\leq&\sum_{1\leq i\leq n}\sum_{1\leq j\neq k\leq n}h^{i\bar{i}}
\chi'^{j\bar{j}}\partial_{k}\chi'_{i\bar{j}}\partial_{\bar{k}}
\chi'_{j\bar{i}}+E_{2},\nonumber
\end{eqnarray}
since $\sum^{n}_{i=1}\sum_{1\leq j\neq k\leq n}h^{i\bar{i}}
\chi'_{j\bar{j}}\partial_{\bar{i}}g_{j\bar{k}}\partial_{i}
g_{k\bar{j}}$ is of type II. Similarly we have
\begin{eqnarray}
&& \ \ \ \ \ \ \ \ \ \ \left|2\cdot{\rm Re}\left(\sum_{1\leq i,j,k\leq n}\chi'^{i\bar{i}}
\chi'^{j\bar{j}}\partial_{\bar{k}}\chi'_{j\bar{i}}
\partial_{k}g_{i\bar{j}}\right)\right|\label{2.36}\\
&=&\left|2\cdot{\rm Re}\left(\sum_{1\leq i,j,k\leq n}
\sqrt{h^{j\bar{j}}}\sqrt{\chi'^{i\bar{i}}}\partial_{\bar{k}}
\chi'_{j\bar{i}}\cdot\sqrt{\chi'^{i\bar{i}}}\partial_{k}g_{i\bar{j}}\right)
\right|\nonumber\\
&\leq&\frac{1}{2}\sum_{1\leq i,j,k\leq n}h^{j\bar{j}}
\chi'^{i\bar{i}}\partial_{\bar{k}}\chi'_{j\bar{i}}
\partial_{k}\chi'_{i\bar{j}}+E_{1} \ \ = \ \ \frac{1}{2}
\sum_{1\leq i,j,k\leq n}h^{i\bar{i}}\chi'^{j\bar{j}}
\partial_{\bar{k}}\chi'_{i\bar{j}}\partial_{k}\chi'_{j\bar{i}}
+E_{1},\nonumber
\end{eqnarray}
where
\begin{equation}
E_{1}=2\sum_{1\leq i,j,k\leq n}\chi'^{i\bar{i}}\partial_{k}g_{i\bar{j}}
\partial_{\bar{k}}g_{j\bar{i}}\label{2.37}
\end{equation}
is a term of type I.

From (\ref{2.32}), (\ref{2.35}), and (\ref{2.36}), we conclude that
\begin{eqnarray}
\widetilde{\Delta}{\rm tr}_{\omega}\chi'&\geq&\sum_{1\leq i,j,k\leq n}h^{i\bar{i}}
\chi'^{j\bar{j}}\partial_{k}\chi'_{j\bar{i}}\partial_{\bar{k}}
\chi'_{i\bar{j}}+\sum_{1\leq i,j,k\leq n}h^{i\bar{i}}\chi'^{j\bar{j}}
\partial_{k}\chi'_{i\bar{j}}\partial_{\bar{k}}\chi'_{j\bar{i}}\nonumber\\
&&- \ \sum^{n}_{i=1}\sum_{1\leq j\neq k\leq n}h^{i\bar{i}}\chi'^{j\bar{j}}
\partial_{k}\chi'_{i\bar{j}}\partial_{\bar{k}}\chi'_{j\bar{i}}
-\frac{1}{2}\sum_{1\leq i,j,k\leq n}h^{i\bar{i}}
\chi'^{j\bar{j}}\partial_{k}\chi'_{j\bar{i}}
\partial_{\bar{k}}\chi'_{i\bar{j}}+E_{2}\label{2.38}\\
&=&\frac{1}{2}\sum_{1\leq i,j,k\leq n}h^{i\bar{i}}\chi'^{j\bar{j}}
\partial_{k}\chi'_{j\bar{i}}\partial_{\bar{k}}\chi'_{i\bar{j}}
+\sum_{1\leq i,j\leq n}h^{i\bar{i}}\chi'^{j\bar{j}}\partial_{j}\chi'_{i\bar{j}}
\partial_{\bar{j}}\chi'_{j\bar{i}}+E_{2}.\nonumber
\end{eqnarray}

It remains to control the term $|\widetilde{\nabla}{\rm tr}_{\omega}\chi'|^{2}_{h}/({\rm tr}_{\omega}\chi')^{2}$. As in (\ref{2.20}) one has
\begin{eqnarray}
\frac{|\widetilde{\nabla}{\rm tr}_{\omega}\chi'|^{2}_{h}}{{\rm tr}_{\omega}\chi'}
&\leq&\sum_{1\leq i,j\leq n}h^{i\bar{i}}\chi'^{j\bar{j}}
\partial_{j}\chi'_{i\bar{j}}\partial_{\bar{j}}\chi'_{j\bar{i}}
\nonumber\\
&&+ \ 2\cdot{\rm Re}\left[\sum_{1\leq i,j\leq n}h^{i\bar{i}}\chi'^{j\bar{j}}
\partial_{i}\chi'_{j\bar{j}}\left(\partial_{\bar{i}}\chi_{j\bar{j}}
-\partial_{\bar{j}}\chi_{j\bar{i}}\right)\right].\label{2.39}
\end{eqnarray}

\begin{lemma}\label{l2.7} One has $\widetilde{\Delta}{\rm log}({\rm tr}_{\omega}\chi')
\gtrsim-1$.
\end{lemma}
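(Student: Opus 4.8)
\textbf{Proof proposal for Lemma \ref{l2.7}.}

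The plan is to mirror the argument of Lemma \ref{l2.4}, but now using the stronger lower bound for $\widetilde{\Delta}{\rm tr}_{\omega}\chi'$ recorded in \eqref{2.38} rather than \eqref{2.17}, together with the inequality \eqref{2.39} for $|\widetilde{\nabla}{\rm tr}_{\omega}\chi'|^{2}_{h}/{\rm tr}_{\omega}\chi'$. First I would plug \eqref{2.38} and \eqref{2.39} into the identity \eqref{2.10}, obtaining
\begin{equation*}
\widetilde{\Delta}{\rm log}({\rm tr}_{\omega}\chi')
\geq\frac{1}{{\rm tr}_{\omega}\chi'}\left[\frac12\sum_{1\leq i,j,k\leq n}h^{i\bar{i}}\chi'^{j\bar{j}}\partial_{k}\chi'_{j\bar{i}}\partial_{\bar{k}}\chi'_{i\bar{j}}-2\,{\rm Re}\Bigl(\sum_{1\leq i,j\leq n}h^{i\bar{i}}\chi'^{j\bar{j}}\partial_{i}\chi'_{j\bar{j}}\bigl(\partial_{\bar{i}}\chi_{j\bar{j}}-\partial_{\bar{j}}\chi_{j\bar{i}}\bigr)\right)+E_{2}\Bigr].
\end{equation*}
The two ``good'' positive terms $\sum h^{i\bar i}\chi'^{j\bar j}\partial_j\chi'_{i\bar j}\partial_{\bar j}\chi'_{j\bar i}$ from \eqref{2.38} and \eqref{2.39} cancel, leaving the factor $\tfrac12$ on the remaining square term — this is exactly the extra room created by the stronger estimate \eqref{2.38}, and it is what allows the non-K\"ahler error terms to be absorbed.

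Next I would estimate the cross term $2\,{\rm Re}(\sum h^{i\bar i}\chi'^{j\bar j}\partial_i\chi'_{j\bar j}(\partial_{\bar i}\chi_{j\bar j}-\partial_{\bar j}\chi_{j\bar i}))$ exactly as in \eqref{2.21}: writing $h^{j\bar j}=\chi'^{j\bar j}\chi'^{j\bar j}$ and using Cauchy--Schwarz, split off $\sqrt{h^{j\bar j}}\sqrt{\chi'^{j\bar j}}\partial_i\chi'_{j\bar j}$ against $\sqrt{\chi'_{j\bar j}}\,h^{i\bar i}(\partial_{\bar i}\chi_{j\bar j}-\partial_{\bar j}\chi_{j\bar i})$, so that
\begin{equation*}
\Bigl|2\,{\rm Re}\Bigl(\sum_{1\leq i,j\leq n}h^{i\bar{i}}\chi'^{j\bar{j}}\partial_{i}\chi'_{j\bar{j}}\bigl(\partial_{\bar{i}}\chi_{j\bar{j}}-\partial_{\bar{j}}\chi_{j\bar{i}}\bigr)\Bigr)\Bigr|\leq\epsilon\sum_{1\leq i,j\leq n}h^{j\bar{j}}\chi'^{j\bar{j}}\partial_{i}\chi'_{j\bar{j}}\partial_{\bar{i}}\chi'_{j\bar{j}}+E_{2},
\end{equation*}
where the new $E_{2}$ term $\tfrac1\epsilon\sum\chi'_{j\bar j}(h^{i\bar i})^2|\partial_{\bar i}\chi_{j\bar j}-\partial_{\bar j}\chi_{j\bar i}|^2$ is of type II because $h^{i\bar i}\le e^{2F}$ and $\chi'_{j\bar j}\le{\rm tr}_\omega\chi'$. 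As in \eqref{2.21} one then bounds $\sum h^{j\bar j}\chi'^{j\bar j}\partial_i\chi'_{j\bar j}\partial_{\bar i}\chi'_{j\bar j}\le\sum_{i,j,k}h^{k\bar k}\chi'^{j\bar j}\partial_i\chi'_{j\bar k}\partial_{\bar i}\chi'_{k\bar j}=\sum_{i,j,k}h^{i\bar i}\chi'^{j\bar j}\partial_k\chi'_{j\bar i}\partial_{\bar k}\chi'_{i\bar j}$ by relabelling, i.e. the full square term. Choosing $\epsilon=\tfrac12$ makes the negative contribution precisely $-\tfrac12\sum h^{i\bar i}\chi'^{j\bar j}\partial_k\chi'_{j\bar i}\partial_{\bar k}\chi'_{i\bar j}$, which cancels the surviving $+\tfrac12$ of the same square term. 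What is left is $\widetilde{\Delta}{\rm log}({\rm tr}_{\omega}\chi')\geq E_{2}/{\rm tr}_{\omega}\chi'\geq-C$, and since $|E_2|_\omega\le C\,{\rm tr}_\omega\chi'$ this gives $\widetilde{\Delta}{\rm log}({\rm tr}_{\omega}\chi')\gtrsim-1$, as claimed.

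The main obstacle, and the only delicate point, is bookkeeping the index structure so that after Cauchy--Schwarz and relabelling the negative square term is genuinely bounded by $\tfrac12$ of the positive square term in \eqref{2.38} (not more) — that is why the coefficient $\tfrac12$ appearing in \eqref{2.38}, coming from the split of the $g_{i\bar j}$-derivative cross term in \eqref{2.36}, is essential; with only the K\"ahler estimate \eqref{2.17} one would have no spare factor and the argument would fail. Everything else is a direct repetition of the manipulations already carried out in the proof of Lemma \ref{l2.4}, with the extra torsion terms $\partial g$ being harmless since they contribute only type I or type II errors.
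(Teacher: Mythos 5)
Your proposal is correct and follows essentially the same route as the paper's own proof of Lemma \ref{l2.7}: you combine \eqref{2.10}, \eqref{2.38}, and \eqref{2.39}, cancel the two copies of $\sum_{i,j}h^{i\bar{i}}\chi'^{j\bar{j}}\partial_{j}\chi'_{i\bar{j}}\partial_{\bar{j}}\chi'_{j\bar{i}}$, and absorb the torsion cross term by the same weighted Cauchy--Schwarz argument as in \eqref{2.40} (your choice $\epsilon=\tfrac12$ is exactly the paper's split into $\tfrac12$ of the square term plus the type II term $2\sum\chi'_{j\bar{j}}(h^{i\bar{i}})^{2}|\partial_{\bar{i}}\chi_{j\bar{j}}-\partial_{\bar{j}}\chi_{j\bar{i}}|^{2}$), leaving $E_{2}/{\rm tr}_{\omega}\chi'\geq-C$. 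No gaps; the bookkeeping of the $\tfrac12$ coefficients matches the paper's computation.
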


\begin{proof} As in the proof of Lemma \ref{l2.4} we have
\begin{eqnarray}
&& \ \ \ \ \ \ \ \ \ \ \left|2\cdot{\rm Re}\left[\sum_{1\leq i,j\leq n}h^{i\bar{i}}
\chi'^{i\bar{i}}\partial_{i}\chi'_{j\bar{j}}
\left(\partial_{\bar{i}}\chi_{j\bar{j}}-\partial_{\bar{j}}
\chi_{j\bar{i}}\right)\right]\right|\label{2.40}\\
&=&\left|2\cdot{\rm Re}\left[\sum_{1\leq i,j\leq n}
\sqrt{h^{j\bar{j}}}\sqrt{\chi'^{j\bar{j}}}
\partial_{i}\chi'_{j\bar{j}}\cdot\sqrt{\chi'_{j\bar{j}}}
h^{i\bar{i}}\left(\partial_{i}\chi_{j\bar{j}}
-\partial_{\bar{j}}\chi_{j\bar{i}}\right)\right]\right|\nonumber\\
&\leq&\frac{1}{2}\sum_{1\leq i,j\leq n}h^{j\bar{j}}
\chi'^{j\bar{j}}\partial_{i}\chi'_{j\bar{j}}
\partial_{\bar{i}}\chi'_{j\bar{j}}+2\sum_{1\leq i,j\leq n}
\chi'_{j\bar{j}}(h^{i\bar{i}})^{2}
\left|\partial_{\bar{i}}\chi_{j\bar{j}}-\partial_{\bar{j}}
\chi_{j\bar{i}}\right|^{2}\nonumber\\
&\leq&\frac{1}{2}\sum_{1\leq i,j,k\leq n}h^{k\bar{k}}
\chi'^{j\bar{j}}\partial_{i}\chi'_{j\bar{k}}
\partial_{\bar{i}}\chi'_{k\bar{j}}+E_{2} \ \ = \ \
\frac{1}{2}\sum_{1\leq i,j,k\leq n}h^{i\bar{i}}
\chi'^{j\bar{j}}\partial_{k}\chi'_{j\bar{i}}
\partial_{\bar{k}}\chi'_{i\bar{j}}+E_{2},\nonumber
\end{eqnarray}
where $E_{2}$ is a term of type II and given by
\begin{equation*}
E_{2}=2\sum_{1\leq i,j\leq n}\chi'_{j\bar{j}}
(h^{i\bar{i}})^{2}\left|\partial_{\bar{i}}
\chi_{j\bar{j}}-\partial_{\bar{j}}\chi_{j\bar{i}}\right|^{2}.
\end{equation*}
Combining (\ref{2.40}) with (\ref{2.10}), (\ref{2.38}), and
(\ref{2.39}), we arrive at
\begin{eqnarray*}
&&\widetilde{\Delta}{\rm log}({\rm tr}_{\omega}\chi')\geq\frac{1}{{\rm tr}_{\omega}
\chi'}\left[\frac{1}{2}\sum_{1\leq i,j,k\leq n}h^{i\bar{i}}\chi'^{j\bar{j}}
\partial_{k}\chi'_{j\bar{i}}\partial_{\bar{k}}\chi'_{i\bar{j}}
+\sum_{1\leq i,j\leq n}h^{i\bar{i}}\chi'^{j\bar{j}}
\partial_{j}\chi'_{i\bar{j}}\partial_{\bar{j}}\chi'_{j\bar{i}}\right.\\
&&-\left.\sum_{1\leq i,j\leq n}h^{i\bar{i}}\chi'^{j\bar{j}}
\partial_{j}\chi'_{i\bar{j}}\partial_{\bar{j}}\chi'_{j\bar{i}}
-\frac{1}{2}\sum_{1\leq i,j,k\leq n}h^{i\bar{i}}\chi'^{j\bar{j}}
\partial_{k}\chi'_{j\bar{i}}\partial_{\bar{k}}
\chi'_{i\bar{j}}+E_{2}\right] \ \ = \ \ \frac{E_{2}}{{\rm tr}_{\omega}\chi'}.
\end{eqnarray*}
By the definition of type II terms, there exists a positive uniform constant $C$ satisfying $|E_{2}|_{\omega}\leq C\cdot{\rm tr}_{\omega}\chi'$. Therefore
\begin{equation*}
\widetilde{\Delta}{\rm log}({\rm tr}_{\omega}\chi')\geq-C.
\end{equation*}
This complete the proof.
\end{proof}

By using the similar method as in the proof of Theorem \ref{t2.5}, we have

\begin{theorem}\label{t2.8} Let $(X, \omega)$ be a compact Hermitian manifold of the complex dimension $n$, and $\chi$ another Hermitian metric. Let $\varphi$ be a smooth solution of Donaldson's equation
\begin{equation*}
\omega\wedge\chi^{n-1}_{\varphi}=e^{F}\chi^{n}_{\varphi},
\end{equation*}
where $F$ is a smooth function on $X$. Assume that
\begin{equation*}
\chi-\frac{n-1}{ne^{F}}\omega>0.
\end{equation*}
Then there are uniform constants $A>0$ and $C>0$, depending only on $X, \omega, \chi$, and $F$, such that
\begin{equation*}
{\rm tr}_{\omega}\chi_{\varphi}\leq C\cdot e^{A(\varphi-\inf_{X}\varphi)}.
\end{equation*}
\end{theorem}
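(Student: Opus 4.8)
The plan is to repeat, almost verbatim, the maximum-principle argument from the proof of Theorem~\ref{t2.5}, substituting the Hermitian estimate Lemma~\ref{l2.7} for its Kähler analogue Lemma~\ref{l2.4}; all of the torsion terms that obstruct the naive second-order computation have already been absorbed in proving Lemma~\ref{l2.7}, so nothing analytically new is required.

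First I would fix the coordinates of Lemma~\ref{l2.2} at an arbitrary point of $X$, so that $g_{i\bar j} = \delta_{ij}$ and $\chi'_{i\bar j} = \lambda'_i\delta_{ij}$ there, and record the identity
\begin{equation*}
\widetilde{\Delta}\varphi = h^{k\bar k}\varphi_{k\bar k} = \sum_{1\le k\le n}\chi'^{k\bar k} - \sum_{1\le k\le n}(\chi'^{k\bar k})^{2}\chi_{k\bar k} = {\rm tr}_{\chi'}\omega - {\rm tr}_{h}\chi,
\end{equation*}
which holds whether or not $\omega$ or $\chi$ is Kähler, where $\chi' = \chi_\varphi$. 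Applying Lemma~\ref{l2.7} to $Q = \log({\rm tr}_\omega\chi') - A\varphi$ from \eqref{2.7} gives $\widetilde{\Delta}Q \ge -C - A({\rm tr}_{\chi'}\omega - {\rm tr}_{h}\chi)$. Using Donaldson's equation \eqref{1.7} in the form ${\rm tr}_{\chi'}\omega = ne^{F}$ — first to rewrite $-A\,{\rm tr}_{\chi'}\omega$ as $-A\sum_i\chi'^{i\bar i}$, and then to add the identically vanishing term $B\bigl(ne^{F} - \sum_i\chi'^{i\bar i}\bigr)$ for an auxiliary constant $B>0$ — this becomes
\begin{equation*}
\widetilde{\Delta}Q \ge (Bne^{F} - C) - (A+B)\sum_{1\le i\le n}\chi'^{i\bar i} + A\sum_{1\le i\le n}(\chi'^{i\bar i})^{2}\chi_{i\bar i}.
\end{equation*}

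Next I would invoke the hypothesis $\chi - \frac{n-1}{ne^{F}}\omega > 0$, which by compactness yields $\chi \ge \frac{n-1}{ne^{F}}(1+\epsilon)\omega$ for some $\epsilon$ with $0 < \epsilon < \frac1{n-1}$, and evaluate at a point $p$ where $Q$ attains its maximum, so that $\widetilde{\Delta}Q(p) \le 0$. Writing $\lambda'_1 \le \cdots \le \lambda'_n$ for the eigenvalues of $\chi'$ at $p$ and assuming $Bne^{F} > C$, the inequality reduces to $0 \ge 1 - \alpha\sum_i\frac1{\lambda'_i} + \beta\sum_i\frac1{\lambda'^{2}_i}$ with $\alpha = \frac{A+B}{Bne^{F}-C}$ and $\beta = \frac{A\frac{n-1}{ne^{F}}(1+\epsilon)}{Bne^{F}-C}$. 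I would then choose $A$ and $B$ as explicit multiples of $C$, exactly as in the proof of Theorem~\ref{t2.5}: pick $\eta \in [0,1)$, impose $\alpha^{2}/\beta = \frac{4}{n-\eta}$, solve the resulting quadratic, and take $A = \bigl(-1 + \frac{2(1+\epsilon)(n-1)}{n-\eta}\bigr)B$, so that both $Bne^{F} > C$ and the admissibility condition $\frac4n \le \frac{\alpha^{2}}{\beta} < \frac4{n-1}$ of Lemma~\ref{l2.6} hold. Lemma~\ref{l2.6} then bounds each $\lambda'_i(p)$ by a uniform constant, hence ${\rm tr}_\omega\chi'(p) = \sum_i\lambda'_i(p) \le C$. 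Finally, for any $q \in X$,
\begin{equation*}
Q(q) \le Q(p) = \log({\rm tr}_\omega\chi')(p) - A\varphi(p) \le C - A\inf_X\varphi,
\end{equation*}
which rearranges to ${\rm tr}_\omega\chi_\varphi(q) \le C\,e^{A(\varphi(q) - \inf_X\varphi)}$, as claimed.

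The only delicate point is arranging the parameters consistently: from $0 < \epsilon < \frac1{n-1}$ one gets $0 < n - (n-1)(1+\epsilon) < 1$, so there is room to pick $\eta \in [0,1)$ with $(1+\epsilon) > \frac{n-\eta}{n-1}$, which makes the denominators appearing in the closed forms for $A$ and $B$ positive and forces $Bne^{F}/C > 1$. Apart from this bookkeeping there is no real obstacle — the genuine non-Kähler difficulty, namely controlling the extra terms involving $\partial g$ coming from the torsion of $\omega$, was already dispatched in the proof of Lemma~\ref{l2.7} — so the present argument is purely algebraic and structurally identical to the Kähler case.
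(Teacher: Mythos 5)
Your proposal is correct and is exactly the argument the paper intends: the paper proves Theorem~\ref{t2.8} by remarking that the method of Theorem~\ref{t2.5} applies verbatim once Lemma~\ref{l2.4} is replaced by Lemma~\ref{l2.7}, and your write-up simply carries out that substitution, including the identity $\widetilde{\Delta}\varphi={\rm tr}_{\chi'}\omega-{\rm tr}_{h}\chi$, the maximum-principle step, and the same choice of $\epsilon,\eta,A,B$ feeding into Lemma~\ref{l2.6}. Nothing further is needed.
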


%%%%%%%%%%%%%%%%%%%%%%%%%%%%%%%%%%%%%%%%%%%%%%%%%%%%%%%%%%%%%%%%%%%%%%%%%%%%%%

%%%%%%%%%%%%%%%%%%%%%%%%%%%%%%%%%%%%%%%%%%%%%%%%%%%%%%%%%%%%%%%%%%%%%%%%%%%%%%

%%%%%%%%%%%%%%%%%%%%%%%%%%%%%%%%%%%%%%%%%%%%%%%%%%%%%%%%%%%%%%%%%%%%%%%%%%%%%%
\end{document}